\newcommand{\too}{\longrightarrow}
\newcommand{\om}{\omega}
\newcommand{\esp}{\quad\mbox{and}\quad}
\def\br{[\;,\;]}
\newcommand{\G}{\mathfrak{g}}
\newcommand{\g}{\mathfrak{g}}
\newcommand{\ad}{{\mathrm{ad}}}
\newcommand{\Ad}{{\mathrm{Ad}}}
\newcommand{\Om}{\Omega}
\newcommand{\al}{\alpha}
\newcommand{\be}{\beta}
\newcommand{\ga}{\gamma}
\newcommand{\e}{\epsilon}
\newtheorem{remark}{Remark}
\newtheorem{Method}{Method}
\font\bb=msbm10
\def\R{\hbox{\bb R}}
\theoremstyle{definition}
\newtheorem{defi}{Definition}
\newtheorem{ex}{Examples}
\theoremstyle{plain}
\newtheorem{thm}{Theorem}[section]
\newtheorem{lem}[thm]{Lemma}
\newtheorem{prop}[thm]{Proposition}
\title{A class of Lie racks associated to symmetric Leibniz algebras}
\author[$a$]{Hamid Abchir}
\author[$b$]{Fatima-ezzahrae Abid }
\author[$c$]{Mohamed Boucetta}
\affil[$a$]{\footnotesize Université Hassan II \\ Ecole Supérieure de Technologie \\ Route d'El Jadida Km 7, B.P. $8012, 20100$ Casablanca, Maroc \newline e-mail: h\_abchir@yahoo.com}
\affil[$b$]{\footnotesize Université Cadi-Ayyad\\ Faculté des Sciences et Techniques, BP $549$ Marrakech, Maroc\newline e-mail: abid.fatimaezzahrae@gmail.com}
\affil[$c$]{\footnotesize Université Cadi-Ayyad\\ Faculté des Sciences et Techniques, BP $549$ Marrakech, Maroc\newline e-mail: m.boucetta@uca.ac.ma}
\begin{document}

	\maketitle

\begin{abstract}
	Given a symmetric Leibniz algebra $(\mathcal{L},.)$, the product  is Lie-admissible and defines a Lie algebra bracket $\br$ on $\mathcal{L}$.  Let $G$ be  the connected and simply-connected Lie group  associated to $(\mathcal{L},\br)$.  We endow $G$ with a Lie rack structure such that the right Leibniz algebra induced on $T_eG$ is exactly $(\mathcal{L},.)$. The obtained Lie rack is said to be associated to the symmetric Leibniz algebra $(\mathcal{L},.)$.  We classify symmetric Leibniz algebras in dimension 3 and 4 and
	we determine all the associated Lie racks. Some of such Lie racks give rise to non-trivial topological quandles. We study some algebraic properties of these  quandles and we give a necessary and sufficient condition for {them}  to be quasi-trivial.  
\end{abstract}

\section{Introduction}\label{section1}
In the 1980's, Joyce \cite{joyce} and Matveev \cite{matveev} introduced the notion of  {\it quandle}.
This notion has been derived from the knot theory, in the way that the axioms of a quandle are the algebraic {interpretations} of Reidemeister moves (I,II,III) for  oriented knot diagrams \cite{elhamdadi}. The quandles provide many  knot invariants.  The fundamental quandle or knot quandle was introduced  by Joyce who showed that it is  a complete invariant of a knot (up to a weak equivalence).  Racks which are a generalization of quandles  were introduced by
Brieskorn \cite{bri} and Fenn and Rourke \cite{fenn}. Recently (see \cite{ carter1, carter2}), there has been investigations on quandles and racks from an algebraic point of view and their relationship with other algebraic structures as  Lie algebras, Leibniz algebras, Frobenius algebras, Yang Baxter equation, and Hopf algebras etc..
.

 {In 2007}, Rubinsztein introduced the notion of topological quandles \cite{rub}. Using a particular action of the braid group $B_n$ on the Cartesian product of $n$ copies of a topological quandle $(Q,\triangleright)$, he  associated the space $J_Q(L)$ of fixed points under the action of the braid $\sigma\in B_n$ for the element $\sigma$ whose closure is the oriented link $L$. The main result of the paper was that the space $J_Q(L)$ depends only on the isotopy class of the oriented link $L$. One can extend the notion of topological quandles to topological racks in a trivial way. An important subclass of  topological racks is the class  of Lie racks consisting of rack structures on smooth manifolds such that the rack operation is smooth.

The main purpose of this paper is to take advantage of a known interaction between symmetric Leibniz algebras and Lie algebras to generate families of Lie rack structures on some Lie groups of dimensions 3 and 4. We derive a family of topological quandles and we study some of their algebraic structures. Furthermore, we give a necessary and sufficient condition for such topological quandles to be quasi-trivial and then constitute link-homotopy invariants.

Let us  {give a short  overview of}  our method. We consider  $(X,\triangleright,1)$  a pointed Lie rack. That is a Lie rack with a fixed element $1\in X$, such that $x\triangleright 1=x$ and $1\triangleright x=1$, for each $x\in X$. It is known (see \cite{kinyon}) that the tangent space $T_1X$ has a structure of right Leibniz algebra. The problem of integrating Leibniz algebras to pointed Lie rack{\color{red}s} was formulated by J. -L. Loday in \cite{loday}. It consists in finding a generalization of the Lie's third theorem for Leibniz algebras. There are only partial answers to this problem (see \cite{covez, bord}). However, S. Benayadi and M. Bordemann \cite{benbor} gave a natural method for integrating  symmetric Leibniz algebras, which are both right and left Leibniz algebras. This method is based on the characterization of symmetric Leibniz algebras given in \cite{saidbar}. More precisely, given a symmetric Leibniz algebra $(\mathcal{L},.)$, the product  is Lie-admissible and defines a Lie algebra bracket $\br$ on $\mathcal{L}$.  Let $G$ be  the connected and simply-connected Lie group  associated to $(\mathcal{L},\br)$.  Then, naturally one can build on $G$  a Lie rack structure such that the right Leibniz algebra on $T_eG$ is exactly $(\mathcal{L},.)$. The obtained Lie rack is said to be associated to the symmetric Leibniz algebra $(\mathcal{L},.)$. Having this method in mind, we determine all symmetric Leibniz algebras in dimension 3 and 4, up to an isomorphism, and for each of them we build the associated Lie rack. We get a family of Lie racks  and some topological quandles.  We study some algebraic properties of these  quandles. 

Our exposition is organized as follows. The Section~\ref{section2} is devoted to preliminaries. We recall the notions of Lie racks, quandles and Leibniz algebras. In Section~\ref{section3}, we state our main result which introduces a Lie rack structure on the connected simply connected Lie group associated to the underlying Lie algebra of a given symmetric Leibniz algebra. We also investigate some algebraic properties of the associated topological quandle. In section~\ref{section4}, we {first} give  all symmetric Leibniz {algebras} of dimension 3 and 4, and then we apply our method~\ref{me} to generate all  the associated Lie racks. In Section~\ref{section5}, we study some  algebraic properties of the derived topological quandles. In Section~\ref{section6},  we give an example of explicit calculations in dimension 4. 

\section{Preliminaries}\label{section2}

\subsection{Lie racks and topological quandles}
\begin{defi}\label{qd}
	\begin{enumerate}
		\item A rack is a  non-empty set $\mathit{X}$ together with a map $\rhd \, : \mathit{X} \times \mathit{X} \longrightarrow \mathit{X}$, $(x,\, y) \mapsto x \rhd y$ such that
		\begin{itemize}
			\item for any fixed element $x \in \mathit{X}$, the map $\mathrm{R}_{x} \, : \mathit{X} \longrightarrow \mathit{X}$, $y \mapsto y \rhd x $ is a bijection,
			\item for any $x,\, y,\, z \in \mathit{X}$, we have  $ (x \rhd y) \rhd z = (x \rhd z)\rhd (y \rhd z)$ {(right self-distributivity)}.
		\end{itemize}  
		\item A rack $\mathit{X}$ is called pointed, if there exists a distinguished element $1 \, \in \mathit{X}$ such that
		$$ x \rhd 1 =\mathrm{R}_{1}(x)=id_{\mathit{X}}(x)=x  \mbox{ and }  1 \rhd x =\mathrm{R}_{x}(1)=1, \, {{\rm for\,\, each}} \,\, x\in \mathit{X}.$$
		\item A rack $\mathit{X}$ is called a quandle if, for any $x\in \mathit{X},\; x \rhd x =x$.
		\item A quandle $\mathit{X}$ is called a Kei if, for any $x,\, y \in \mathit{X},\; (y \rhd x) \rhd x = y$,  i.e.,   $\mathrm{R}_{x}$ is an involution.
		\item A topological quandle (rack) is a  topological space $\mathit{X}$ with a  quandle {(rack)} structure such that  the product
		$\rhd \, : \mathit{X} \times \mathit{X} \longrightarrow \mathit{X}$ is continuous and, for all {$x \in \mathit{X}$, $\mathrm{R}_{x} \, : \mathit{X} \longrightarrow \mathit{X}$, $y \longmapsto y \rhd x$} is a homeomorphism.
		\item A Lie rack is a smooth manifold $\mathit{X}$ with a  rack structure such that  the product
		$\rhd \, : \mathit{X} \times \mathit{X} \longrightarrow \mathit{X}$ is smooth and, for all $x \in \mathit{X}$, $\mathrm{R}_{x} \, : \mathit{X} \longrightarrow \mathit{X}$, $y \longmapsto y \rhd x$ is a diffeomorphism.
	\end{enumerate}
\end{defi}
When $\mathit{X}$ is a rack, sometimes we write $ y \rhd^{-1} x:=\mathrm{R}^{-1}_{x}(y)$.

\begin{remark} {The rack defined above is said to be a right distributive rack}.  There is also the notion of left distributive rack which is equivalent. {In the following, we will consider the right version unless otherwise stated.}
	
\end{remark} 

\begin{ex}
	\begin{itemize}
		\item Any non-empty set $\mathit{X}$ equipped with the operation $x \rhd y\,:= x $ for any $x,\, y \in  \mathit{X}$ is a kei, which is called the trivial kei, the trivial quandle or the trivial rack.
		\item Let $\mathit{G}$ be a group. Then $\mathit{G}$ is a quandle under the operation of conjugation, i.e.
			$$ h \rhd g\, =\, g^{-1}hg \quad {\rm for\ all}\, g,h \, \in\ \mathit{G}.$$
		We denote this quandle by $\mathrm{Conj}(G)$.
		\item Let $\mathbb{Z}_{n}$ be the ring of integers modulo $n \in \mathbb{N}^{\ast}$. For any $x,\,y \in  \mathbb{Z}_{n}$, we define the operation $x \rhd y\,= 2y-x$. The pair $(\mathbb{Z}_{n},\, \rhd )$ is a quandle which is called the \textit{dihedral {quandle}} and is denoted by $\mathit{R}_{n}$.
		\item Let $\mathbb{Z}[t,\, t^{-1}]$ be the ring of Laurent polynomials in the variable $t$. Let $\mathit{M}$ be a $\mathbb{Z}[t,\, t^{-1} ]$-module. The operation $x \rhd y = tx + (1-t)y$ for any {$x,y \in \mathit{M}$}, makes $\mathit{M}$ into a quandle called the Alexander quandle.
	\end{itemize}
\end{ex}

{A map $f$ between two racks $(X_1,\rhd_1)$ and $(X_2,\rhd_2)$ is a \textit{rack homomorphism} if $f$ preserves the rack operations, i.e., $f(x\rhd_1 y)=f(x)\rhd_2 f(y)$ for all $x,y$ $\in X_1$. If furthermore $f$ is a bijection it is called an isomorphism of racks. In particular, a bijective rack homomorphism $f: \mathit{X} \too \mathit{X}$ is called a \textit{rack automorphism}. One can define a quandle homomorphism in exactly the same way.}
\begin{remark}
	It is easy to see that for any rack $X$ and any $x\in X$, the right translation $\mathrm{R}_x$ is a rack automorphism.
\end{remark}
{The last remark allows to show the following proposition.}
\begin{prop}\label{idemp}
	Let {$(\mathit{X}, \rhd)$} be a rack and  $Q(\mathit{X})$ be the set of its idempotents,
	\[Q(X) = \{ x \in X \;,\; x \rhd x = x \},\]
	then $(Q(\mathit{X}), \rhd)$ is a quandle. In particular, if $(\mathit{X}, \rhd)$  {is} a Lie rack {then} $(Q(\mathit{X}), \rhd)$ is a topological quandle.
\end{prop}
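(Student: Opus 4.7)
The plan is to first verify that $Q(X)$ is closed under $\rhd$, then check that each $R_x$ with $x\in Q(X)$ restricts to a bijection of $Q(X)$, and finally observe that the remaining axioms (self-distributivity, idempotency) are either inherited or built in by construction. For the topological part, one just has to see that $Q(X)$ is the fixed-point set of a continuous map and that the restricted right translations are homeomorphisms.

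The heart of the argument, and the place where closure really has to be earned, is showing that $x\rhd y\in Q(X)$ whenever $x,y\in Q(X)$. I would exploit the preceding remark which says that every $R_z$ is a rack automorphism. Applied to $R_y$, the automorphism property reads $R_y(a\rhd b)=R_y(a)\rhd R_y(b)$ for all $a,b\in X$, which is equivalent to the composition identity
\[
R_y\circ R_x \;=\; R_{R_y(x)}\circ R_y \qquad (x,y\in X).
\]
Rewriting this as $R_{R_y(x)}=R_y\circ R_x\circ R_y^{-1}$ and evaluating at $R_y(x)$ gives
\[
R_{x\rhd y}(x\rhd y)\;=\;R_y\bigl(R_x(x)\bigr).
\]
Since $x\in Q(X)$ we have $R_x(x)=x$, so the right-hand side equals $R_y(x)=x\rhd y$. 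This exactly says $(x\rhd y)\rhd(x\rhd y)=x\rhd y$, proving $x\rhd y\in Q(X)$. This is the step I expect to be the main obstacle, because the naive approach of plugging $(a,b,c)=(x,y,x\rhd y)$ into right self-distributivity does not visibly collapse; one really needs the automorphism reformulation to make the two idempotency hypotheses bite.

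To see that $R_x|_{Q(X)}$ is a bijection of $Q(X)$ when $x\in Q(X)$, the above argument shows the map sends $Q(X)$ into $Q(X)$; conversely, if $y\in Q(X)$ and $z=R_x^{-1}(y)$, then the automorphism property gives $R_x(z\rhd z)=R_x(z)\rhd R_x(z)=y\rhd y=y=R_x(z)$, and injectivity of $R_x$ forces $z\rhd z=z$, so $R_x^{-1}$ also preserves $Q(X)$. Right self-distributivity on $(Q(X),\rhd)$ is immediate from that on $(X,\rhd)$, and idempotency holds by the very definition of $Q(X)$, so $(Q(X),\rhd)$ is a quandle.

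For the topological statement, when $(X,\rhd)$ is a Lie rack the map $\Delta\colon X\to X$, $x\mapsto x\rhd x$, is smooth (hence continuous) and $Q(X)=\{x\in X\mid \Delta(x)=x\}$ is the equaliser of $\Delta$ with the identity, hence a closed subspace of $X$ in particular a topological space. The restriction of $\rhd$ to $Q(X)\times Q(X)\to Q(X)$ is continuous as a restriction of a smooth map, and each $R_x$ with $x\in Q(X)$ is a diffeomorphism of $X$ sending $Q(X)$ bijectively onto itself, so its restriction is a homeomorphism of $Q(X)$. Thus $(Q(X),\rhd)$ meets all the requirements of a topological quandle.
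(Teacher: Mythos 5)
Your proof is correct and follows essentially the same route as the paper: closure of $Q(X)$ under $\rhd$, stability of $Q(X)$ under $R_x^{-1}$ via the automorphism property of right translations, and inheritance of the remaining axioms. Note only that your automorphism-based closure step is the paper's one-line computation $(x\rhd y)\rhd(x\rhd y)=(x\rhd x)\rhd y=x\rhd y$ in disguise: right self-distributivity applied to the triple $(x,x,y)$ (rather than $(x,y,x\rhd y)$) collapses directly, so the obstacle you anticipated is not actually there.
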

\begin{proof}
	We note first that $Q(\mathit{X})$ is closed by the binary operation $\rhd$. Indeed,
	if $x, y \in Q(\mathit{X})$, then  \[ (x \rhd y) \rhd (x\rhd y) = (x \rhd x) \rhd y = x \rhd y.\] The right distributivity is obviously satisfied.
	
	For any $x, y \in Q(\mathit{X})$, the restriction of the right translation $\mathrm{R}_{y}$ to $Q(\mathit{X})$ is injective by assumption. Actually
	$\mathrm{R}_{y}$ is bijective when considered as a map defined on $\mathit{X}$. Let $y, z \in Q(\mathit{X})$. There exists a unique $x \in  \mathit{X}$ such that 
	$\mathrm{R}_{y}(x) = z$ and then, $x = \mathrm{R}_{y}^{-1}(z) = z \rhd^{-1} y$. Since $\mathrm{R}_{y}^{-1}$ is a quandle morphism, we have
	\[ x \rhd x = \mathrm{R}_{y}^{-1}(z)\rhd \mathrm{R}_{y}^{-1}(z) = \mathrm{R}_{y}^{-1}(z \rhd  z) = \mathrm{R}_{y}^{-1}(z) = x\]
	Then the unique $x \in  \mathit{X}$ such that $x = \mathrm{R}_{y}^{-1}(z)$ belongs to $Q(\mathit{X})$. This ends the proof. 
\end{proof}

Furthermore, it is known that the set of all rack automorphisms of $\mathit{X}$ forms a group denoted {by} $\mathrm{Aut}(\mathit{X})$. The group of inner automorphisms $\mathrm{Inn}(\mathit{X})$ generated by all bijections $\mathrm{R}_{x}$ is a normal subgroup of $\mathrm{Aut}(\mathit{X})$. Then, the map 
$$\begin{array}{c c c c }
\mathrm{R} \; : &  \mathit{X} & \too  & \mathrm{Inn}(\mathit{X})\\
& x  & \longmapsto & \mathrm{R}_{x}
\end{array}$$
induces a right action of the group $\mathrm{Inn}(\mathit{X})$ on $\mathit{X}$. The orbit $\Om(x)$ of an element $x\in \mathit{X}$ is given by
{\[\Om(x){\color{red}\xout{ :  }}=\{\varphi(x), \varphi \in \mathrm{Inn}(\mathit{X})  \}=\{\mathrm{R}_y(x),\, y\in X\}. \]}
Note that the notion of inner automorphisms of quandles is similarly defined. 

{Let $(Q,\rhd)$ be a quandle}. The subset $Z(Q){\color{red}\sout{ : }}=\{x \in Q\;:\; x \rhd y=x \quad \forall  y \in Q \}$  is called the \textit{center} of the quandle $Q$. In particular, if $Q = \mathrm{Conj}(\mathit{G})$, then the center $Z(Q)$ of the quandle $Q$ coincides with the center $Z(\mathit{G})$ of the group $\mathit{G}$. 

We end this section by recalling the definition of three classes of quandles (see for instance~\cite{inoue, davidl, medial}). We will show in the last section that some quandles we will obtain are in these classes. 
\begin{defi}\label{def}\begin{enumerate}
		\item A quandle $(Q,\rhd )$ is called \textit{quasi-trivial} if $x \rhd \varphi(x) = x$  for any
		$x \in Q $ and $\varphi \in \mathrm{Inn}(Q)$.
		This  is equivalent to {$x \rhd y =x $ for all $x,y\in\Om(x)$}.
		\item A quandle $Q$ is called \textit{medial}, if for any $x,y,z,w \in Q$, we have 
		\[(x\rhd y)\rhd (z\rhd w)= (x\rhd z)\rhd (y\rhd w). \]
	    \end{enumerate}
\end{defi}

\subsection{Symmetric Leibniz algebras}
In this subsection, we recall the definition of a Leibniz algebra with an emphasis on the structure of a symmetric Leibniz algebra for which we give a useful characterization and some immediate properties. For more details on Leibniz algebras one can see \cite{covez,loday}.\\
Let $(\mathfrak{L}, .)$ be an  algebra. For any $u\in \mathfrak{L}$, we denote by $L_{u}$, respectively $R_{u}$, the two endomorphisms of the vector space $\mathfrak{L}$ defined by $L_{u}(v)= u.v$ and $R_{u}(v)= v.u$, $\forall v \in \mathfrak{L}$. The maps $L_{u}$ and $R_{u}$ are respectively called the left translation and the right translation by $u$.
\begin{defi} 
	\begin{enumerate}
		\item An algebra $(\mathfrak{L}, .)$ is said to be a left Leibniz algebra, if for each $u \in \mathfrak{L}$, the left translation ${L}_{u}$ is a derivation. That is, for any $v,\, w \in \mathfrak{L}$ we have the following identity
		\begin{equation}\label{1}
		u.(v.w)=(u.v).w + v.(u.w).
		\end{equation}
		\item An algebra $(\mathfrak{L}, .)$ is said to be a right Leibniz algebra, if for each $u \in \mathfrak{L}$, the right translation ${R}_{u}$ is a derivation. That is, for any $v,\, w \in \mathfrak{L}$ we have the following identity
		\begin{equation}\label{3}
		(v.w).u= (v.u).w + v.(w.u).
		\end{equation}
		\item If $(\mathfrak{L},.)$ is both a right and a left Leibniz algebra then it is called a symmetric Leibniz algebra.
	\end{enumerate}
\end{defi}

Any Lie algebra is a symmetric Leibniz algebra. However, the class of symmetric Leibniz algebras is far more bigger than the class of Lie algebras as we will see later.\\
Let $\mathfrak{L}$ be a real vector space equipped with a bilinear map $ \cdot : 
\mathfrak{L}\times \mathfrak{L} \longrightarrow \mathfrak{L}$. Let $[\:,\:]$ and $\circ$ be respectively its antisymmetric and symmetric parts. For all $u,\,v\: \in \mathfrak{L}$, they are defined by:
\begin{equation}
[u,v]= \frac{1}{2}(u.v-v.u) \quad and \quad u \circ v = \frac{1}{2}(u.v+v.u).
\end{equation}
Thus 
\begin{equation}\label{pdl}
u.v = [u,v] + u \circ v.
\end{equation}

The following proposition gives a useful characterization of symmetric Leibniz algebras (see~[\cite{saidbar} , Proposition 2.11]).
\begin{prop}\label{3.1}
	Let $(\mathfrak{L},.)$ be an algebra. The following assertions are equivalent:
	\begin{itemize}
		\item[1] $(\mathfrak{L},.)$ is a symmetric Leibniz algebra.
		\item[2]The following conditions hold :
		\begin{itemize}
			\item[(a)] $(\mathfrak{L},[\,,\,])$ is a Lie algebra.
			\item[(b)] For any $u,v \in \mathfrak{L}$, $u\circ v$ belongs to the center of $(\mathfrak{L},[\,,\,])$.
			\item[(c)] For any {$u,v,w \in \mathfrak{L}$, $([u,v])\circ w=0 \text{  and  }  (u\circ v)\circ w=0$}. 
		\end{itemize}
	\end{itemize}
\end{prop}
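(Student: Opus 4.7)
\textbf{Proof plan for Proposition \ref{3.1}.}

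The plan is to go through the two implications using the decomposition $u.v=[u,v]+u\circ v$ and, as a central tool in both directions, the \emph{annihilator} $Z(\mathfrak{L}):=\{x\in\mathfrak{L}:L_x=R_x=0\}$. For $(2)\Rightarrow(1)$ the argument is direct: assuming (a)--(c), I would expand each side of the left Leibniz identity \eqref{1} and the right Leibniz identity \eqref{3} via the decomposition, then use (b) to kill every commutator with a symmetric term (since $u\circ v$ is central in $(\mathfrak{L},[\,,\,])$), and use (c) to kill every $\circ$-term whose left factor is either a bracket or already a $\circ$. Each side collapses to a pure iterated bracket: $u.(v.w)=[u,[v,w]]$, $(u.v).w=[[u,v],w]$, $v.(u.w)=[v,[u,w]]$, and symmetrically for the right version. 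The remaining identity is then exactly the Jacobi identity for $[\,,\,]$, given by (a).

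For the harder direction $(1)\Rightarrow(2)$, I would first extract $u.u\in Z(\mathfrak{L})$ by specialising both Leibniz identities: setting $w=u$ in \eqref{3} yields $v.(u.u)=0$, i.e.\ $R_{u.u}=0$, while setting $v=u$ in \eqref{1} yields $(u.u).w=0$, i.e.\ $L_{u.u}=0$. Polarising $u\mapsto u+v$ and using bilinearity then shows $u\circ v\in Z(\mathfrak{L})$ for all $u,v$. From this two of the claimed properties are immediate: $[u\circ v,w]=\tfrac12((u\circ v).w-w.(u\circ v))=0$ gives (b), and $(u\circ v)\circ w=\tfrac12((u\circ v).w+w.(u\circ v))=0$ gives the second half of (c). To obtain the Jacobi identity (a), I would re-expand the left Leibniz identity modulo the annihilator: since every symmetric part is annihilating, $u.(v.w)=u.[v,w]$, $(u.v).w=[u,v].w$, $v.(u.w)=v.[u,w]$, and then decomposing these three terms again and using the part of (c) already shown reduces \eqref{1} to exactly Jacobi (the leftover $\circ$-terms will be shown to vanish once the remaining part of (c) is proved).

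The main obstacle is the remaining identity $[u,v]\circ w=0$ in (c), which does not follow formally from $u\circ v\in Z(\mathfrak{L})$. My plan here is to compute $2[u,v]\circ w=[u,v].w+w.[u,v]$ by expanding $[u,v]=\tfrac12(u.v-v.u)$ and then applying the right Leibniz identity to $(u.v).w$ and $(v.u).w$ and the left Leibniz identity to $w.(u.v)$ and $w.(v.u)$. After collecting, the eight resulting terms group into four pairs of the form $(u.w+w.u).v$, $(v.w+w.v).u$, $u.(v.w+w.v)$, $v.(u.w+w.u)$, each of which is a product with a factor $2(u\circ w)$, $2(v\circ w)$, $2(v\circ w)$, $2(u\circ w)$ lying in $Z(\mathfrak{L})$; each pair therefore vanishes. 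This finishes (c) and, plugged back into the Jacobi computation above, finishes (a).
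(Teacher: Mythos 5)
The paper does not actually prove this proposition; it is quoted from Barreiro--Benayadi \cite{saidbar} (Proposition 2.11), so there is no in-paper argument to compare yours against. On its own merits your plan is correct and complete. The key points all check out: specialising $w=u$ in the right Leibniz identity and $v=u$ in the left one gives $R_{u.u}=L_{u.u}=0$, and polarisation puts every $u\circ v$ in the two-sided annihilator, which immediately yields (b) and the identity $(u\circ v)\circ w=0$; the grouping of the eight terms in $2[u,v]\circ w$ into the four annihilating pairs $(u\circ w).v$, $u.(v\circ w)$, $(v\circ w).u$, $v.(u\circ w)$ does work (the signs cooperate), giving the remaining half of (c); and once every mixed term is killed, both Leibniz identities collapse to the two derivation forms of the Jacobi identity, which settles (a) in one direction and is supplied by (a) in the other. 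The only stylistic remark is that you prove something slightly stronger than (b) — namely that $u\circ v$ annihilates $\mathfrak{L}$ on both sides for the product $.$, not merely that it is central for $[\,,\,]$ — but that stronger fact is exactly what you need downstream, so this is a feature rather than a gap.
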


According to this proposition, any symmetric Leibniz algebra is given by a Lie algebra $(\mathfrak{L},\, [\, ,\, ])$ and a bilinear symmetric form  $\omega \, : \mathfrak{L} \times \mathfrak{L} \longrightarrow Z(\mathfrak{L})$ where $Z(\mathfrak{L})$ is the center of the Lie algebra, such that, for any $u,\, v,\, w \in \mathfrak{L}$,
\begin{equation}\label{eq}
\omega([u, v], w) = \omega(\omega(u, v), w) = 0.
\end{equation}
Then the product of the  symmetric Leibniz algebra is given by
\begin{equation}\label{symprod}
u.v = [u, v] + \omega(u, v), \quad u, v \in \mathfrak{L}.
\end{equation}

Note that if $Z(\mathfrak{L}) = 0 $ or $[ \mathfrak{L}, \, \mathfrak{L}] = \mathfrak{L}$ then the solutions of ~\ref{eq} are trivial.

The following proposition is easy to prove.

\begin{prop}\label{equi} Let $(\G,\br)$ a Lie algebra and $\om$ and $\mu$ two solutions of~\ref{eq}. Then $(\G,\bullet_\om)$ is isomorphic to $(\G,\bullet_\mu)$ (as symmetric Lie algebras) if and only if there exists an automorphism $A$ of $(\G,\br)$ such that
	\[ \mu(u,v)=A^{-1}\om(Au,Av). \]\end{prop}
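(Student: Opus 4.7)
The plan is to exploit the unique decomposition of the product of a symmetric Leibniz algebra into its antisymmetric part (the Lie bracket) and its symmetric part (the form $\omega$ or $\mu$), as exhibited in \eqref{pdl} and \eqref{symprod}. Because an isomorphism is linear, it respects this decomposition, and both directions of the equivalence will fall out by separating symmetric and antisymmetric components.

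For the forward direction, suppose $A:(\G,\bullet_\mu)\too(\G,\bullet_\omega)$ is an isomorphism of symmetric Leibniz algebras. Then for all $u,v\in\G$,
\[ A([u,v]+\mu(u,v))=[Au,Av]+\omega(Au,Av). \]
I would swap $u$ and $v$ in this identity; since $[\,,\,]$ is antisymmetric and $\mu,\omega$ are symmetric, subtracting and adding the two equations and dividing by two separates the two sides into their antisymmetric and symmetric components. The antisymmetric part yields $A[u,v]=[Au,Av]$, so $A$ is an automorphism of the Lie algebra $(\G,[\,,\,])$; the symmetric part yields $A\mu(u,v)=\omega(Au,Av)$, which is exactly the required relation $\mu(u,v)=A^{-1}\omega(Au,Av)$. (Note that since $A$ is a Lie algebra automorphism it automatically preserves $Z(\G)$, so the relation lives in the right space.)

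For the converse, given a Lie algebra automorphism $A$ of $(\G,[\,,\,])$ with $\mu(u,v)=A^{-1}\omega(Au,Av)$, I would simply compute
\[ A(u\bullet_\mu v)=A[u,v]+A\mu(u,v)=[Au,Av]+\omega(Au,Av)=Au\bullet_\omega Av, \]
so $A$ is a symmetric Leibniz algebra isomorphism from $(\G,\bullet_\mu)$ to $(\G,\bullet_\omega)$.

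There is essentially no obstacle: the only mild subtlety is remembering that an isomorphism of symmetric Leibniz algebras is by assumption a linear bijection intertwining the products, so it must automatically intertwine both the antisymmetric and symmetric parts individually; once this observation is made, the proof reduces to two short lines in each direction.
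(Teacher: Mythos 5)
Your proof is correct, and in fact the paper gives no proof at all for this proposition (it merely remarks that it ``is easy to prove''), so your argument of polarizing the identity $A(u\bullet_\mu v)=Au\bullet_\omega Av$ into its antisymmetric and symmetric parts is precisely the intended one. The direction of the isomorphism is immaterial since replacing $A$ by $A^{-1}$ swaps the roles of $\omega$ and $\mu$, so your two-line computation in each direction settles the equivalence completely.
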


\section{Lie racks and {topological quandles} associated to symmetric Leibniz algebras}\label{section3}

Let $(\mathit{X},1)$ be a pointed Lie rack with left distributivity. Kinyon showed in \cite{kinyon} that the tangent space $T_1\mathit{X}$ carries a structure of left Leibniz algebra. In what follows, we show that, in the same way, one can get a structure of a right Leibniz algebra on the tangent space $T_1\mathit{X}$ if the pointed Lie rack $(\mathit{X},1)$ is considered with  right distributivity.\\
For each $x\, \in \mathit{X}$, $\mathrm{R}_{x}(1)= 1 $. We consider the linear map $${\Ad_x}=T_{1}\mathrm{R}_{x}\,:T_1\mathit{X}\longrightarrow T_1\mathit{X}.$$
We have 
\[\Ad_{x\rhd y} = \Ad_{x}\circ \Ad_{y} \circ \Ad^{-1}_{x}.\]
Thus $\Ad\, : \mathit{X}\longrightarrow \mathrm{GL}(T_1\mathit{X})$ is an homomorphism of Lie racks.
If we put $$ u.v:= \ad_{u}(v)=\frac{d}{dt}_{| t=0} \Ad_{(c(t))}(u), \; \,\forall \, u,v \in T_{1}\mathit{X}{\color{red},}$$
where $c \, : ]-\epsilon,\, \epsilon [ \longrightarrow \mathit{X}$ is a smooth path in $\mathit{X}$ such that $c(0)=1 \mbox{ and } c'(0)= v$. We have the following theorem.

\begin{thm}[\cite{kinyon}]\label{tang} Let $(\mathit{X},1)$ be a pointed Lie rack. Then the tangent space {$T_{1}\mathit{X}$} endowed with the product $$ u.v:= \ad_{u}(v)=\frac{d}{dt}_{| t=0} \Ad_{(c(t))}(u), \; \,\forall \, u,\,v \in T_{1}\mathit{X},$$
	is a right Leibniz algebra. Moreover, if $\mathit{X}=\mathrm{Conj}(\mathit{G})$ where $\mathit{G}$ is a Lie group  then $(T_1\mathit{X},\,.)$ is the Lie algebra of $\mathit{G}$. 
\end{thm}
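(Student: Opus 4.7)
The plan is to extract the right Leibniz identity from the observation that each right translation $\mathrm{R}_z$ on $\mathit{X}$ is a rack automorphism fixing the basepoint $1$, so its differential $\Ad_z:T_1\mathit{X}\to T_1\mathit{X}$ must preserve any product canonically built from $\mathrm{R}$ on $T_1\mathit{X}$; differentiating this algebra-morphism property once more in $z$ then yields the Leibniz identity in a single stroke. As a preliminary, I note that $u.v$ is bilinear: viewing $\Ad$ as a smooth map $\mathit{X}\to\mathrm{GL}(T_1\mathit{X})$ with $\Ad_1=\mathrm{id}$, its differential $T_1\Ad:T_1\mathit{X}\to\mathrm{End}(T_1\mathit{X})$ is linear, and $u.v=(T_1\Ad(v))(u)$ is thus linear in $v$ through $T_1\Ad$ and linear in $u$ because each $\Ad_{c(t)}$ is.

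The central step is to show that each $\Ad_z$ is an automorphism of the product on $T_1\mathit{X}$. Right self-distributivity rearranges into $\mathrm{R}_z\circ\mathrm{R}_y=\mathrm{R}_{y\rhd z}\circ\mathrm{R}_z$, hence $\mathrm{R}_{y\rhd z}=\mathrm{R}_z\circ\mathrm{R}_y\circ\mathrm{R}_z^{-1}$; since all three maps fix $1$, differentiating at $1$ gives the conjugation rule $\Ad_{y\rhd z}=\Ad_z\circ\Ad_y\circ\Ad_z^{-1}$. Fix $z\in\mathit{X}$ and let $c$ be a smooth path with $c(0)=1$ and $c'(0)=v$; then
\[
\Ad_z(u.v)=\frac{d}{dt}_{|t=0}\Ad_z\Ad_{c(t)}(u)=\frac{d}{dt}_{|t=0}\Ad_{c(t)\rhd z}(\Ad_z u)=\Ad_z(u).\Ad_z(v),
\]
because $t\mapsto c(t)\rhd z$ is a path through $1$ with velocity $T_1\mathrm{R}_z(v)=\Ad_z(v)$. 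Differentiating this identity along a further path $\gamma$ with $\gamma(0)=1,\ \gamma'(0)=w$, the left-hand side becomes $(u.v).w$ by definition, while the bilinear Leibniz rule applied to the right yields $(u.w).v+u.(v.w)$; after relabeling $u,v,w\to v,w,u$ this is precisely the right Leibniz identity $(v.w).u=(v.u).w+v.(w.u)$.

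For the moreover part, when $\mathit{X}=\mathrm{Conj}(G)$ the map $\mathrm{R}_y(x)=y^{-1}xy$ differentiates at $e$ to the group-theoretic $\mathrm{Ad}_{y^{-1}}$; taking $c(t)=\exp(tv)$ and unwinding the definition gives $u.v=\frac{d}{dt}_{|t=0}\mathrm{Ad}_{\exp(-tv)}(u)=[u,v]$, so $(T_1\mathit{X},.)$ coincides with the Lie algebra of $G$. The subtlest point of the whole argument is the bilinearity of the product on $T_1\mathit{X}$: it is exactly what licenses the second differentiation and the term-by-term Leibniz-rule expansion on the right, and without the intrinsic description $u.v=(T_1\Ad(v))(u)$ the conjugation-to-Leibniz passage could not be closed.
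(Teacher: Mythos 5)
Your proof is correct and follows exactly the route the paper sets up in the lines preceding the statement (the paper itself imports this theorem from Kinyon without reproving it): define $\Ad_x=T_1\mathrm{R}_x$, differentiate the conjugation identity coming from right self-distributivity to see that each $\Ad_z$ is an automorphism of the bilinear product $u.v=(T_1\Ad(v))(u)$, and differentiate once more to obtain $(u.v).w=(u.w).v+u.(v.w)$, which is the right Leibniz identity after relabelling; the computation for $\mathrm{Conj}(G)$ giving $u.v=[u,v]$ is also correct. The only discrepancy is immaterial: your rule $\Ad_{y\rhd z}=\Ad_z\circ\Ad_y\circ\Ad_z^{-1}$ is the correct one for the paper's right-distributive convention, whereas the formula displayed in the paper has the subscripts transposed.
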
 
The problem of integrating {a} Leibniz algebra into {a} pointed Lie {r}ack was first formulated by J.-L. Loday in\cite{loday}. Though
until now there is no natural answer to this problem, but there are many partial results. In \cite{kinyon}, Kinyon gave a
positive answer for split Leibniz algebras. In\cite{covez}, S. Covez gave a local answer to the integration
problem. He showed that every Leibniz algebra can be integrated into {a} local augmented Lie
rack. In \cite{bord}, Bordemann gave a global process of integrating Leibniz algebras but this process
is, unfortunately, not functorial. For our purpose, there is a functorial process of integrating
symmetric Leibniz algebras which was communicated to us privately by S. Benayadi and M. Bordemann and we will give it now in details.

Let $(\mathfrak{L},\,.)$ be a symmetric Leibniz algebra. According to Proposition~\eqref{3.1}, there exists a
Lie bracket $\br$ on $\mathfrak{L}$ and a bilinear symmetric form $\omega : \mathfrak{L}\times\mathfrak{L} \longrightarrow Z(\mathfrak{L})$, where $Z(\mathfrak{L})$ is the center of $(\mathfrak{L},\;\br)$, such that,
\[u.v = [u, v] + \omega(u, v) \; \mbox{for all }\; u, v \in  \mathfrak{L}\]
and $\omega$ satisfies \eqref{eq}.

\begin{Method}\label{me} Denote by $\mathfrak{L}.\mathfrak{L}= \text{span}\{u.v,\, u,\,v \in \mathfrak{L} \}$, $\mathfrak{a}= \mathfrak{L}/\mathfrak{L}.\mathfrak{L}$, $q: \mathfrak{L} \longrightarrow \mathfrak{a}$ and define $\beta: \mathfrak{a} \times \mathfrak{a} \longrightarrow \mathfrak{L}$ given by 
	\begin{equation*}
	\beta (q(u),\, q(v)) =\omega (u, v).
	\end{equation*}
	By virtue of  \eqref{eq}, $\beta$ is well defined. Moreover, since $[u, v] =\frac{1}{2}(u.v - v.u)$, $q$ is a Lie algebra homomorphism when $\mathfrak{a}$ is considered as an abelian Lie algebra.
	Consider $\mathit{G}$ the connected and simply connected Lie group whose Lie algebra is $(\mathfrak{L},\,[\,,\,])$ and $\mbox{exp}\,: \mathfrak{L} \longrightarrow \mathit{G}$ its exponential. Hence there exists an homomorphism of Lie groups $\kappa \,: \mathit{G} \longrightarrow \mathfrak{a}$ such that $d_{e} \kappa \,=\, q$. Finally, consider $ \chi \, :\mathit{G} \times \mathit{G} \longrightarrow \mathit{G}$ given by 
	\begin{equation*}
	\chi (h,\,g)\,=\, \mbox{exp} (\beta(\kappa(h),\, \kappa(g))){\ {\rm for\ all}\  h,g\in G}.
	\end{equation*}
	Define now on $\mathit{G}$ the binary product by putting
	\begin{equation}\label{sb}
	{h \rhd g \, :=\,g^{-1}hg \chi (h,\,g)\ {\ {\rm for\ all}\  h,g\in G}.}
	\end{equation}\end{Method}
{ We show the result obtained by S. Benayadi and M. Boredmann.}
\begin{thm}
	{ Let $(\mathfrak{L},\,.)$ be a symmetric Leibniz algebra}. Let $\mathit{G}$ be the connected simply connected Lie group associated to the underlying Lie algebra endowed with the binary operation $\rhd$ defined by \eqref{sb}.
		Then $(\mathit{G},\rhd)$ is a {pointed} Lie rack  {whose}  associated right Leibniz algebra is  {exactly} $(\mathfrak{L},\,.)$.
\end{thm}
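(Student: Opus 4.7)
The plan is to verify the rack axioms together with the pointed property directly from formula~\eqref{sb}, and then to evaluate the right Leibniz product of Theorem~\ref{tang} on $T_eG$ and recognise it as $(\mathfrak{L},.)$.

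Before starting, I would isolate two structural properties of $\chi$ that drive every subsequent calculation. First, since $\omega$ takes values in $Z(\mathfrak{L})$, so does $\beta$, and because $G$ is connected the element $\chi(h,g)=\exp(\beta(\kappa(h),\kappa(g)))$ lies in $\exp(Z(\mathfrak{L}))\subset Z(G)$; in particular it commutes with every element of $G$. Second, because $\omega(u,v)=u\circ v=\tfrac12(u.v+v.u)$ manifestly belongs to $\mathfrak{L}.\mathfrak{L}=\ker q$, we have $q\circ\beta=0$, hence $\kappa\circ\chi=0$. Combined with the fact that $\kappa$ lands in the abelian group $\mathfrak{a}$, this gives $\kappa(g^{-1}hg)=\kappa(h)$ and $\kappa(h\rhd g)=\kappa(h)$, so $\chi$ is invariant under modifying either argument by a conjugation or by a $\rhd$-product; explicitly $\chi(h_1\rhd h_2,h_3)=\chi(h_1,h_3)$, and similarly for the other slot.

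Using these two observations, the rack structure falls out routinely. The pointed identities $h\rhd e=h$ and $e\rhd g=e$ follow from $\kappa(e)=0$, which forces $\chi(h,e)=\chi(e,g)=e$. For each $g$, the smooth map $\phi_g\colon h'\mapsto gh'g^{-1}\chi(h',g)^{-1}$ is a two-sided inverse to $R_g$: composing and invoking the centrality of $\chi$ together with the invariance $\chi(\phi_g(h'),g)=\chi(h',g)$ yields the identity in both directions, so $R_g$ is a diffeomorphism. For right self-distributivity, expanding both sides of $(h_1\rhd h_2)\rhd h_3=(h_1\rhd h_3)\rhd(h_2\rhd h_3)$ and collecting the central $\chi$-factors, with the identifications $\chi(h_i\rhd h_j,h_k)=\chi(h_i,h_k)$, reduces each side to the common expression
\[h_3^{-1}h_2^{-1}h_1h_2h_3\cdot\chi(h_1,h_2)\chi(h_1,h_3).\]

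Finally, to identify the induced right Leibniz product on $T_eG\cong\mathfrak{L}$, I would compute $\Ad_g:=T_eR_g$ by differentiating $h\mapsto g^{-1}hg\,\chi(h,g)$ at $h=e$. Using $\chi(e,g)=e$ and the chain rule for $\exp\circ\beta(\kappa(\cdot),\kappa(g))$, one obtains
\[\Ad_g(u)=\Ad^G_{g^{-1}}(u)+\beta(q(u),\kappa(g)),\]
where $\Ad^G$ denotes the Lie group adjoint of $G$. Differentiating once more along a path $c(t)$ with $c(0)=e$, $c'(0)=v$, the first summand contributes $-[v,u]=[u,v]$ and the second contributes $\beta(q(u),q(v))=\omega(u,v)$, so $u.v=\ad_u(v)=[u,v]+\omega(u,v)$, which by~\eqref{symprod} is exactly the original product on $\mathfrak{L}$. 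The main obstacle I foresee is not a deep one but purely organisational: the whole proof is driven by the two lemmas $\chi(\cdot,\cdot)\in Z(G)$ and $\kappa\circ\chi=0$, and one must apply them systematically so that conjugations, left/right translations, and the $\rhd$-operation itself are transparent to $\chi$.
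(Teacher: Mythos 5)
Your proposal is correct and follows essentially the same route as the paper: both arguments rest on the two key facts that $\chi$ takes values in the centre of $G$ and that $\kappa\circ\chi=0$ (so that $\chi$ is unchanged when either argument is conjugated or replaced by a $\rhd$-product), and both conclude with the same two-step differentiation of $h\mapsto g^{-1}hg\,\chi(h,g)$ to recover $u.v=[u,v]+\omega(u,v)$. The only cosmetic difference is in the self-distributivity check, where the paper passes through the intermediate identity $(h_1\rhd h_2)\rhd h_3=h_1\rhd(h_2h_3)$ while you expand both sides directly to the common normal form $h_3^{-1}h_2^{-1}h_1h_2h_3\,\chi(h_1,h_2)\chi(h_1,h_3)$.
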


\begin{proof}
	 {At first, we note} that the map $\chi$  {satisfies} the following properties for all elements $h,h_1,h_2,h_3\in \mathit{G}$:   
	{
		\begin{itemize}
			\item The map $\chi$ is symmetric, since $\beta$ is {symmetric}.
			\item  $\chi(h,\,1)=1=\chi(1,\,h)$ because $\kappa(1)=0$ and $\beta$ is bilinear.
			\item 
			\begin{eqnarray*}
				\chi(h_1h_2,\,h_3)&=& \mbox{exp}(\beta(\kappa(h_1h_2),\,\kappa(h_3)))= \mbox{exp}(\beta(\kappa(h_1)+\kappa(h_2),\,\kappa(h_3)))\\
				&=& \mbox{exp}(\beta(\kappa(h_1),\,\kappa(h_3)))\mbox{exp}(\beta(\kappa(h_2),\,\kappa(h_3)))\\
				&=&\chi(h_1,\,h_3)\chi(h_2,\,h_3)
			\end{eqnarray*}
			\item  {Since the map $\beta$ takes} its values  in the centre of $\mathfrak{L}$, it  follows that, $$1=\chi(h^{-1}_{1}h_1,\,h_2)=\chi(h^{-1}_{1},\,h_2)\chi(h_1,\,h_2)$$ and so $\chi(h^{-1}_{1},\,h_2)=\chi(h_{1},\,h_2)^{-1}$.
			\item  Since $\kappa$ is a morphism  {between} connected simply {connected} Lie groups, then for all $x\in \mathfrak{L}$ we have $\kappa(\mbox{exp}(x))= \mbox{exp}_{a}(T_{1}\kappa(x))$. The latter {is} equal to $q(x)$ because the exponential map of the vector space Lie group $\mathfrak{a}$ is the identity.{ Then} 
			$$\kappa(\chi(h_1,\,h_2))=\kappa(\mbox{exp}(\beta(\kappa(h_1),\,\kappa(h_2))))=q(\beta(\kappa(h_1),\,\kappa(h_2)))=0,$$
			{hence} 
			$$ \chi(h_1\chi(h_2,\,h_3),\,h_4)= \chi(h_1,\,h_4).$$
		\end{itemize}
	}
	{Now we will use those properties to show the theorem}.
	
	{First, it is easy to see}  that the binary operation $\rhd : \mathit{G}\times \mathit{G} \longrightarrow \mathit{G}$ is a smooth map. {Then} for any $g \in \mathit{G}$, the map $\mathrm{R}_{g}\: : \mathit{G}\longrightarrow \mathit{G}$ {which} sends $h$ to $h \rhd g$ is invertible with smooth inverse {$\mathrm{R}^{-1}_{g}= \mathrm{R}_{g^{-1}}$.} {This} follows easily from the identity $\chi( h\rhd g,\, h_{1})= \chi(h,\, h_{1})$ for any $ h,\,g,\,h_{1} \in \mathit{G}$. 
	
	 {Let us} show that {the} self-distributivity {condition}  is satisfied. {For that we will use the following identity. If} $z \in \mathit{G}$ {writes} $z= \chi(h_1,\,h_2)$ {for $h_1,h_2\in\mathit{G}$, then} 
	\begin{eqnarray*}
		h_1 \rhd (zh_{2})&=& (zh_{2})^{-1}h_1 (zh_{2})\chi(h_1,\,zh_{2})\\
		&=& h^{-1}_2 z^{-1}h_1 zh_{2}\chi(h_1,\,h_{2})\\
		&=& h^{-1}_2 h_1h_2 \chi(h_1,\,h_2)\\
		&=& h_1 \rhd h_2.
	\end{eqnarray*}
	On {the} one hand we have
	\begin{eqnarray*}
		(h_1 \rhd h_2)\rhd h_3 &=& (h^{-1}_2h_1h_2\chi(h_1,\,h_2))\rhd  h_3\\
		&=& h^{-1}_3h^{-1}_2 h_1 h_2h_3\chi(h_1,\,h_2)\chi(h^{-1}_2h_1h_2\chi(h_1,\,h_2),\,h_3)\\
		&=& (h_2h_3)^{-1}h_1(h_2h_3)\chi(h_1,\,h_2)\chi(h_1,\,h_3)\\
		&=& (h_2h_3)^{-1}h_1(h_2h_3)\chi(h_1,\,h_2h_3)\\
		&=& h_1 \rhd (h_2h_3).
	\end{eqnarray*}
	 On the other hand 
	\begin{eqnarray*}
		(h_1 \rhd h_3)\rhd (h_2 \rhd h_3)&=& h_1 \rhd (h_3(h_2 \rhd h_3))\\
		&=& h_1 \rhd (h_2h_3\chi(h_2,\,h_3))\\
		&=&  \chi(h_2,\,h_3)^{-1}h^{-1}_3h^{-1}_2 h_1 h_2h_3\chi(h_2,\,h_3)\chi(h_1,\,h_2h_3\chi(h_2,\,h_3))\\
		&=& (h_2h_3)^{-1}h_1h_2h_3\chi(h_1,\,h_3h_2)\\
		&=& h_1 \rhd (h_2h_3),
	\end{eqnarray*}
	 {This proves} the self-distributivity  condition. Moreover we  {have} 
	$$h_1 \rhd 1  =  h_1\chi (1,\,h_1) =h_1\esp 1 \rhd  h_1  =  h^{-1}_1h_1\chi (1,\,h_1) =1$$
	 {Finally $(\mathit{G},\rhd)$ is a pointed} Lie rack.\\
	{For the last claim in the theorem}  {we must show} that the corresponding Leibniz product on $\mathfrak{L}$ is {exactly} the  product  we started with {in $\mathfrak{L}$}. Indeed, according to Theorem~\eqref{tang}, we get for all $u,v \in \mathfrak{L}$ and $x \in \mathit{G}$ :  
	\begin{eqnarray*}
		\frac{\partial}{\partial t}_{|t=0}\mathrm{R}_{x}(\exp(tu))&=& \frac{\partial}{\partial t }_{|t=0} (\exp(tu) \rhd x),\\
		&=& \frac{\partial}{\partial t}_{|t=0}(x^{-1}\exp(tu)x \chi(x,\, \exp(tu)),\\
		&=& \frac{\partial}{\partial t}_{|t=0}(\exp(tx^{-1}ux) \exp(\beta(\kappa(x),\, \kappa(\exp(tu))))),\\
		&=& \frac{\partial}{\partial t}_{|t=0}(\exp(tAd_{x^{-1}}(u)) \exp(\beta(\kappa(x),\,tq(u)))),\\
		&=& Ad_{x^{-1}}(u)\,+\, \beta(\kappa(x),\,q(u)).
	\end{eqnarray*}
	Replacing $x$ by the curve $t\: \longmapsto \exp(tv)$, we  obtain 
	\begin{eqnarray*}
		\frac{\partial}{\partial t}_{|t=0}T_{1}\mathrm{R}_{\exp(tv)}(u))&=& \frac{\partial}{\partial t }_{|t=0} (Ad_{\exp(-tv)}(u)\,+\, \beta(\kappa(\exp(tv)),\,q(u))),\\
		&=& \frac{\partial}{\partial t}_{|t=0}(Ad_{\exp(-tv)}(u))\,+\, \beta(q(v),\,q(u)) ,\\
		&=& [-v,\,u]\,+\, v \circ u,\\
		&=& [u,\, v]\, + u \circ v= u.v.
	\end{eqnarray*}
	 {This establishes} the formula~(\ref{pdl}) and completes the proof.
\end{proof}
{ We say that the obtained Lie rack $(\mathit{G},\, \rhd )$ is associated to the symmetric Leibniz algebra $(\mathfrak{L},\,.)$.}

The Lie racks associated to symmetric Leibniz algebras give rise to a class of topological quandles in the following way. Let $(\mathfrak{L},\,.)$ be a symmetric Leibniz algebra and $(\mathit{G},\, \rhd )$ be the associated Lie rack. According to Proposition \ref{idemp},
$$\mathit{Q}((\mathit{G},\rhd))=\{g\in G\ ;\ \chi(g,g)=1_G\}$$
is a topological quandle.

\section{Lie racks associated to symmetric Leibniz algebras in dimensions 3 and 4}\label{section4}

In this section, by using Proposition \ref{3.1} we determine first all the symmetric Leibniz algebras of dimension 3 and 4, up to an isomorphism and, for each of them, we use  Method \ref{me} described in the last section to build the associated Lie racks.

\subsection{Symmetric Leibniz algebras of dimension 3 and 4}
 
We proceed in the following way:
\begin{enumerate}\label{om}
\item We pick a Lie algebra $\G$ with non trivial center in the list of \cite{biggs}.
	\item By a direct computation, we determine the symmetric forms $\om$ satisfying \eqref{eq}.
	\item In the spirit of Proposition \ref{equi}, we act by the group of automorphisms of $\G$ on the obtained $\om$  to reduce the parameters.
	
\end{enumerate}By doing so, we get for any Lie algebra $\G$ of dimension 3 or 4 with non trivial center all  non equivalent symmetric Leibniz structures { for which} $\G$ { is} the underlying Lie algebra. {In} the last section, we give an example of detailed computations. The results are summarized in Table {\eqref{tab1}}.\\

\subsection{Lie racks}\label{quand}
In this subsection, we determine by using Method \ref{me} the  Lie racks  associated to the symmetric Leibniz { algebras} determined in the last subsection. Then we give  the associated topological quandles defined in Proposition \ref{idemp}. In the  { last section}, we   {explicit} the computations   {for a particular} example.

\begin{itemize}
\item[$\bullet$ $\g_{3,1}$.] The associated simply-connected Lie group is given by
	$$\mathit{G}_{3,1} = \displaystyle\left\{  
	\begin{bmatrix}
	1 & y & x \\ 
	0 & 1 & z \\ 
	0 & 0 & 1
	\end{bmatrix},
	x,\,y,\,z,\,\in \mathbb{R}\displaystyle\right\}.$$
\begin{enumerate}\item The Lie rack structure associated to $\g_{3,1}^1$ and the associated topological quandle are respectively
\begin{footnotesize}\[M(x,\,y,\,z) \rhd_1 M(a,\,b,\,c)\, =\begin{bmatrix} 1&y&yb+2\,cy+zc+x\\ \noalign{\medskip}0&1&
	z\\ \noalign{\medskip}0&0&1\end{bmatrix} \]\end{footnotesize}
	\[Q(\mathit{G}_{3,1})^1=
	\{M(a,b,c)\in \mathit{G}_{3,1}|  \; b=-c \}.\]
	\item The Lie rack structure associated to $\g_{3,1}^2$ and the associated topological quandle are respectively
 \begin{footnotesize}	\begin{equation*}
   M(x,\,y,\,z) \rhd_2 M(a,\,b,\,c)\, = \begin{bmatrix} 1&y&-bz+cy+zc+x\\ \noalign{\medskip}0&1&z
  \\ \noalign{\medskip}0&0&1\end{bmatrix} 
  \end{equation*}\end{footnotesize}
  \[Q(\mathit{G}_{3,1})^2 =
  \{M(a,b,c)\in \mathit{G}_{3,1}|  \; c=0 \}.\]
\item The Lie rack structure associated to $\g_{3,1}^3$ and the associated topological quandle are respectively
\begin{footnotesize}\begin{equation*}
M(x,\,y,\,z) \rhd_3 M(a,\,b,\,c)\, =\begin{bmatrix} 1&y&\epsilon\,zc+yb-bz+cy+x
\\ \noalign{\medskip}0&1&z\\ \noalign{\medskip}0&0&1\end{bmatrix}
\end{equation*}\end{footnotesize}
 \[Q(\mathit{G}_{3,1})^3=
\{M(a,b,c)\in \mathit{G}_{3,1}|  \; bc=0  \}.\]
\item The Lie rack structure associated to $\g_{3,1}^4$ and the associated topological quandle are respectively
\begin{footnotesize}\begin{equation*}
M(x,\,y,\,z) \rhd_4 M(a,\,b,\,c)\, =\begin{bmatrix} 1&y&\ga bz+\ga cy-bz+cy+x
\\ \noalign{\medskip}0&1&z\\ \noalign{\medskip}0&0&1\end{bmatrix}
\end{equation*}\end{footnotesize}
\[Q(\mathit{G}_{3,1})^4 =
\{M(a,b,c)\in \mathit{G}_{3,1}|  \; bc=0  \}.\]
	
\end{enumerate}	

\item[$\bullet$ $\g_{2,1}\oplus 2\g_{1}$. ] The associated simply-connected Lie group  is given by 
$$\mathit{G}_{2,1}\times \R^2 = \displaystyle\left\{  
\begin{bmatrix}
1 & 0 &0 & 0\\ 
w  & e^{-x} &0 &0 \\
0 & 0&e^y &0\\
0& 0&0 & e^z 
\end{bmatrix},
w,x,\,y, z \,\in \mathbb{R}\displaystyle\right\}.$$
\begin{enumerate}\item The Lie rack structure associated to $(\g_{2,1}\oplus 2\g_{1})^1$ and the associated topological quandle are defined by the \textbf{conjugation} operation on $\mathit{G}_{2,1}\times \R^2$.
\item  The Lie rack structure associated to  $(\g_{2,1}\oplus 2\g_{1})^2$ and the associated topological quandle are respectively
\begin{footnotesize}
	\[ M(w,\,x,\,y,\,z)\rhd_2 M(t,\,a,\,b,\,c)=\begin{bmatrix}1&0&0&0\\ \noalign{\medskip}-t{{\rm e}^{a
}}+{{\rm e}^{a}}w+t{{\rm e}^{a-x}}&{{\rm e}^{-x}}&0&0
\\ \noalign{\medskip}0&0&{{\rm e}^{ax+y}}&0\\ \noalign{\medskip}0&0&0&
{{\rm e}^{ax+z}}\end{bmatrix}\] \end{footnotesize}
\[Q(\mathit{G}_{2,1}\times \R^2)^2 =
\{M(t,a,b,c)\in \mathit{G}_{2,1}\times \R^2|  \; a=0 \}.\]
\item Lie rack structure associated to  $(\g_{2,1}\oplus 2\g_{1})^3$ and the associated topological quandle
\begin{footnotesize}\[ M(w,\,x,\,y,\,z)\rhd_3 M(t,\,a,\,b,\,c)= \begin{bmatrix} 1&0&0&0\\ \noalign{\medskip}-t{{\rm e}^{a
 }}+{{\rm e}^{a}}w+t{{\rm e}^{a-x}}&{{\rm e}^{-x}}&0&0
 \\ \noalign{\medskip}0&0&{{\rm e}^{ax+y}}&0\\ \noalign{\medskip}0&0&0&
 {{\rm e}^{z}}\end{bmatrix} \] \end{footnotesize}
 \[Q(\mathit{G}_{2,1}\times \R^2)^3=
 \{M(t,a,b,c)\in \mathit{G}_{2,1}\times \R^2|  \; a=0 \}.\]
 \item  The Lie rack structure associated to  $(\g_{2,1}\oplus 2\g_{1})^4$ and the associated topological quandle are respectively
\begin{footnotesize}\[ M(w,\,x,\,y,\,z)\rhd_4 M(t,\,a,\,b,\,c)= \begin{bmatrix} 1&0&0&0\\ \noalign{\medskip}-t{{\rm e}^{a
 }}+{{\rm e}^{a}}w+t{{\rm e}^{a-x}}&{{\rm e}^{-x}}&0&0
 \\ \noalign{\medskip}0&0&{{\rm e}^{xc+za+y}}&0\\ \noalign{\medskip}0&0&0&
 {{\rm e}^{z}}\end{bmatrix} \] \end{footnotesize}
  \[Q(\mathit{G}_{2,1}\times \R^2)^4 =
 \{M(t,a,b,c)\in \mathit{G}_{2,1}\times \R^2|  \; ac=0  \}.\]
 \item  The Lie rack structure associated to  $(\g_{2,1}\oplus 2\g_{1})^5$ and the associated topological quandle are respectively
\begin{footnotesize} \[ M(w,\,x,\,y,\,z)\rhd_5 M(t,\,a,\,b,\,c)= \begin{bmatrix} 1&0&0&0\\ \noalign{\medskip}-t{{\rm e}^{a
 }}+{{\rm e}^{a}}w+t{{\rm e}^{a-x}}&{{\rm e}^{-x}}&0&0
 \\ \noalign{\medskip}0&0&{{\rm e}^{\ga xa+y}}&0\\ \noalign{\medskip}0&0&0&
 {{\rm e}^{z}}\end{bmatrix} \] \end{footnotesize}
 \[Q(\mathit{G}_{2,1}\times \R^2)^5 =
 \{M(t,a,b,c)\in\mathit{G}_{2,1}\times \R^2| \; \ga a=0  \}.\]
\end{enumerate}

\item[$\bullet$ $\g_{3,1}\oplus \g_{1}$.] The associated simply-connected Lie group is given by 
$$\mathit{G}_{3,1}\times \R = \displaystyle\left\{  
\begin{bmatrix}
1 & x& w &0\\ 
0 & 1 &y& 0 \\ 
0 & 0 & 1& 0\\
0& 0& 0&e^z 
\end{bmatrix},w,x,\,y,\,z,\,\in \mathbb{R}\displaystyle\right\}$$
\begin{enumerate}
	\item The Lie rack structure associated to $(\g_{3,1}\oplus \g_{1})^1$ and the associated topological quandle are respectively
\begin{footnotesize}	\[ M(w,\,x,\,y,\,z)\rhd_1 M(t,\,a,\,b,\,c)\begin{bmatrix} 1&x&ax+2\,bx+yb+w&0\\ \noalign{\medskip}0
	&1&y&0\\ \noalign{\medskip}0&0&1&0\\ \noalign{\medskip}0&0&0&{{\rm e}^
		{z}}\end{bmatrix} \] \end{footnotesize}
		\[Q(\mathit{G}_{3,1}\times \R)^1 =
	\{M(t,a,b,c)\in \mathit{G}_{3,1}\times \R|  \; a=-b \}.\]
	
	\item The Lie rack structure associated to $(\g_{3,1}\oplus \g_{1})^2$ and the associated topological quandle are respectively
\begin{footnotesize}\[M(w,\,x,\,y,\,z)\rhd_2 M(t,\,a,\,b,\,c)=\begin{bmatrix} 1&x&-ay+bx+yb+w&0\\ \noalign{\medskip}0&1
	&y&0\\ \noalign{\medskip}0&0&1&0\\ \noalign{\medskip}0&0&0&{{\rm e}^{z
	}}\end{bmatrix}.\]\end{footnotesize}
	\[Q(\mathit{G}_{3,1}\times \R)^2=\{M(t,a,b,c)\in \mathit{G}_{3,1}\times \R|  \; b=0 \}.\]
\item The Lie rack structure associated to $(\g_{3,1}\oplus \g_{1})^3$ and the associated topological quandle are respectively
\begin{footnotesize}\[M(w,\,x,\,y,\,z)\rhd_3 M(t,\,a,\,b,\,c) =\begin{bmatrix} 1&x&xa-ya+xb+\e yb+w&0\\ \noalign{\medskip}0
&1&y&0\\ \noalign{\medskip}0&0&1&0\\ \noalign{\medskip}0&0&0&{{\rm e}^
	{z}}\end{bmatrix} \] \end{footnotesize}
\[Q(\mathit{G}_{3,1}\times \R)^3 =\{M(t,a,b,c)\in \mathit{G}_{3,1}|  \; b=0 \}.\]
\item The Lie rack structure associated to $(\g_{3,1}\oplus \g_{1})^4$ and the associated topological quandle are respectively
\begin{footnotesize}	\[ M(w,\,x,\,y,\,z)\rhd_4 M(t,\,a,\,b,\,c)=\begin{bmatrix} 1&x&\gamma\,ay+\gamma\,bx-ya+xb+w&0
\\ \noalign{\medskip}0&1&y&0\\ \noalign{\medskip}0&0&1&0
\\ \noalign{\medskip}0&0&0&{{\rm e}^{z}}\end{bmatrix}\] \end{footnotesize}
\[Q(\mathit{G}_{3,1}\times \R)^4 =\{M(t,a,b,c)\in \mathit{G}_{3,1}\times \R|  \; ab=0 \}.\]	
\end{enumerate}
\item[$\bullet$ $\g_{3,2}\oplus \g_{1}$.] The associated simply-connected Lie group is given by 
$$\mathit{G}_{3,2}\times \R = \displaystyle\left\{  
\begin{bmatrix}
1 & 0 &0 & 0\\ 
x  & e^{y} &0 &0 \\
w &  -ye^{y}&e^y &0\\
0& 0&0 & e^z 
\end{bmatrix},
w,x,\,y, z \,\in \mathbb{R}\displaystyle\right\}.$$
The Lie rack structure associated to $(\g_{3,2}\oplus \g_{1})^1$ and the associated topological quandle are
\begin{footnotesize}
\begin{equation*}
M(w,\,x,\,y,\,z)\rhd M(t,\,a,\,b,\,c)=\\
\begin{bmatrix} 1&0&0&0\\ 
 (x-a){{\rm e}^{-b}}+a{{\rm e}^{-b+y}}&{{\rm e}^{y}}&0&0\\ 
 (bx-ab){{\rm e}^{-b}}+{{\rm e}^{-b+y}}(ab-ay) &-y{{\rm e}^{y}}&{{\rm e}^{y}}&0\\ 
 (w-t){{\rm e}^{-b}}t+t{{\rm e}^{-b+y}} & & & \\
	\noalign{\medskip}0&0&0&{{\rm e}^{yb+z}}\end{bmatrix} 
\end{equation*}\end{footnotesize}
\[Q(\mathit{G}_{3,2}\times \R)=\{M(t,a,b,c)\in \mathit{G}_{3,2}\times \R|  \; b=0 \}.\]
 \item[$\bullet$ $\g_{3,3}\oplus \g_{1}$.] The associated simply-connected Lie group is given by 
 $$\mathit{G}_{3,3}\times \R = \displaystyle\left\{  
 \begin{bmatrix}
 1 & 0 &0 & 0\\ 
 x  & e^{y} &0 &0 \\
 w &  0&e^y &0\\
 0& 0&0 & e^z 
 \end{bmatrix},
 w,x,\,y, z \,\in \mathbb{R}\displaystyle\right\}.$$
The Lie rack structure associated to $(\g_{3,3}\oplus \g_{1})^1$ and the associated topological quandle are respectively  
\begin{footnotesize}
\begin{equation*}
M(w,\,x,\,y,\,z)\rhd M(t,\,a,\,b,\,c)=
\begin{bmatrix} 1&0&0&0\\ \noalign{\medskip}-a{{\rm e}^{-
		b}}+{{\rm e}^{-b}}x+a{{\rm e}^{-b+y}}&{{\rm e}^{y}}&0&0
\\ \noalign{\medskip}-t{{\rm e}^{-b}}+{{\rm e}^{-b}}w+t{{\rm e}^{-b+y}
}&0&{{\rm e}^{y}}&0\\ \noalign{\medskip}0&0&0&{{\rm e}^{yb+z}}
\end{bmatrix} 
\end{equation*}
\end{footnotesize} 
\[Q(\mathit{G}_{3,3}\times \R)=\{M(t,a,b,c)\in \mathit{G}_{3,3}\times \R|  \; b=0 \}.\]
\item[$\bullet$ $\g^0_{3,4}\oplus \g_{1}$.] The associated simply-connected Lie group is given by 
$$\mathit{G}^0_{3,4}\times \R = \displaystyle\left\{  
\begin{bmatrix}
1 & 0 &0 & 0\\ 
w  &  \cosh(y) &-\sinh(y) &0 \\
x &  -\sinh(y)&\cosh(y) &0\\
0& 0&0 & e^z 
\end{bmatrix},
w,x,\,y, z \,\in \mathbb{R}\displaystyle\right\}.$$
The Lie rack structure associated to $(\g^0_{3,4}\oplus \g_{1})^1$ and the associated topological quandle are respectively 
\begin{footnotesize}\begin{equation*}
M(w,\,x,\,y,\,z)\rhd M(t,\,a,\,b,\,c)=
\begin{bmatrix}
1 & 0 &0 & 0\\ 
\sinh(y-b)a+\cosh(y-b)t+ &  \cosh(y) &-\sinh(y) &0 \\
 \sinh(b)(x-a)+\cosh(b)(w-t)  & & & \\
\sinh(y-b)t+\cosh(y-b)a+&  -\sinh(y)&\cosh(y) &0\\
 \sinh(b)(w-t)+\cosh(b)(x-a) & & & \\
0& 0&0 & e^{yb+z} 
\end{bmatrix}
\end{equation*}\end{footnotesize}
\[Q(\mathit{G}^0_{3,4}\times \R)=\{M(t,a,b,c)\in \mathit{G}^0_{3,4}\times \R|  \; b=0 \}.\]
\item[$\bullet$ $\g^\al_{3,4}\oplus \g_{1}$.] The associated simply-connected Lie group is given by 
$$\mathit{G}^\al_{3,4}\times \R = \displaystyle\left\{  
\begin{bmatrix}
1 & 0 &0 & 0\\ 
w  &  e^{\al y}\cosh(y) &-e^{\al y}\sinh(y) &0 \\
x &  -e^{\al y}\sinh(y)& e^{\al y}\cosh(y) &0\\
0& 0&0 & e^z 
\end{bmatrix},
w,x,\,y, z \,\in \mathbb{R}\displaystyle\right\}.$$
The Lie rack structure associated to $\g^\al_{3,4}\oplus \g_{1}$ and the associated topological quandle are respectively
\begin{footnotesize}
\begin{align*}
&M(w,\,x,\,y,\,z)\rhd M(t,\,a,\,b,\,c)=\\
&\begin{bmatrix}
1 & 0 &0 & 0\\ 
\sinh(y-b)ae^{\al(y-b)}+\cosh(y-b)te^{\al(y-b)}+&  e^{\al y} \cosh(y) &- e^{\al y}\sinh(y) &0 \\
\sinh(b)(x-a)e^{-\al b}+\cosh(b)(w-t)e^{-\al b} & & & \\
\sinh(y-b)te^{\al(y-b)}+\cosh(y-b)ae^{\al(y-b)}+&  - e^{\al y}\sinh(y)& e^{\al y}\cosh(y) &0\\
\sinh(b)(w-t)e^{-\al b}+\cosh(b)(x-a)e^{-\al b} & & & \\
0& 0&0 & e^{yb+z} 
\end{bmatrix}
\end{align*}
\end{footnotesize}
\[Q(\mathit{G}^\al_{3,4}\times \R)=\{M(t,a,b,c)\in \mathit{G}^\al_{3,4}\times \R|  \; b=0 \}.\]
\item[$\bullet$ $\g^0_{3,5}\oplus \g_{1}$.] The associated simply-connected Lie group is given by	
$$\mathit{G}^0_{3,5}\times \R = \displaystyle\left\{  
\begin{bmatrix}
1 & 0 &0 & 0 &0 \\ 
w  &  \cos(y) &-\sin(y) &0 &0 \\
x &  \sin(y)&\cos(y) &0&0\\
0& 0&0 & e^y &0\\
0& 0&0 & 0 &e^z\\
\end{bmatrix},
w,x,\,y, z \,\in \mathbb{R}\displaystyle\right\}.$$

The Lie rack structure associated to $\g^0_{3,5}\oplus \g_{1}$ and the associated topological quandle are respectively
\begin{footnotesize}\begin{equation*}
M(w,\,x,\,y,\,z)\rhd M(t,\,a,\,b,\,c)=\\
\begin{bmatrix}
1 & 0 &0 & 0 &0 \\ 
\sin(y-b)a+\cos(y-b)t+ &  \cos(y) &-\sinh(y) &0 &0  \\
\sin(b)(x-a)+\cos(b)(w-t)  &  & & \\
\sin(y-b)t+\cos(y-b)a+&  \sin(y)&\cos(y) &0 &0 \\ 
\sin(b)(t-w)+\cos(b)(x-a) & & & \\
0& 0&0 & e^y &0\\
0& 0&0 & 0& e^{yb+z} 
\end{bmatrix}
\end{equation*}\end{footnotesize}
\[Q(\mathit{G}^0_{3,5}\times \R)=\{M(t,a,b,c)\in \mathit{G}^0_{3,5}\times \R|  \; b=0 \}.\]
\item[$\bullet$ $\g^\al_{3,5}\oplus \g_{1}$.] The associated simply-connected Lie group is given by
 $$\mathit{G}^\al_{3,5}\times \R = \displaystyle\left\{  
\begin{bmatrix}
1 & 0 &0 & 0 &0 \\ 
w  &  e^{\al y}\cos(y) &-e^{\al y}\sin(y) &0 &0 \\
x &  e^{\al y}\sin(y)&e^{\al y}\cos(y) &0&0\\
0& 0&0 &  e^y &0\\
0& 0&0 & 0 &e^z\\
\end{bmatrix},
w,x,\,y, z \,\in \mathbb{R}\displaystyle\right\}.$$
The Lie rack structure associated to $\g^\al_{3,5}\oplus \g_{1}$ and the associated topological quandle are respectively
\begin{footnotesize}\begin{align*}
&M(w,\,x,\,y,\,z)\rhd M(t,\,a,\,b,\,c)=\\
&\begin{bmatrix}
1 & 0 &0 & 0 &0 \\ 
\sin(y-b)ae^{\al(y-b)}+\cos(y-b)te^{\al(y-b)}+ &   e^{\al y}\cos(y) &- e^{\al y}\sinh(y) &0 &0  \\
\sin(b)(x-a)e^{-\al b}+\cos(b)(w-t)e^{-\al b}  & & & \\
\sin(y-b)te^{\al(y-b)}+\cos(y-b)a+&   e^{\al y}\sin(y)& e^{\al y}\cos(y) &0 &0 \\ 
\sin(b)(t-w)e^{-\al b}+\cos(b)(x-a)e^{-\al b} & & & \\
0& 0&0 & e^y &0\\
0& 0&0 & 0& e^{yb+z} 
\end{bmatrix}
\end{align*}\end{footnotesize}

\[Q(\mathit{G}^\al_{3,5}\times \R)^3=\{M(t,a,b,c)\in \mathit{G}^\al_{3,5}\times \R|  \; b=0 \}.\]

\item[$\bullet$$\g_{4,1}$.] The associated simply-connected Lie group is given by 
$$\mathit{G}_{4,1} = \displaystyle\left\{  
\begin{bmatrix}
1 & z & \frac{1}{2}z^2& w\\ 
0 & 1 &z& w-x \\ 
0 & 0 & 1& y\\
0& 0& 0& 1
\end{bmatrix},	w,x,\,y,\,z,\,\in \mathbb{R}\displaystyle\right\}$$
\begin{enumerate}
	\item The Lie rack structure associated to $\g_{4,1}^1$ and the associated topological quandle are respectively
\begin{footnotesize}\begin{equation*}
	M(w,\,x,\,y,\,z)\rhd_1 M(t,\,a,\,b,\,c)=
 \begin{bmatrix}
	1 & z & \frac12 z^2& w+\frac12 c^2y+\frac12 bz^2+(yb+\e zc)(1+z)\\
	 & & &  -bcz+c(x-w)+z(t-a)\\ 
	0 & 1 &z&bz-cy+w-x +(yb+\e zc)\\ 
	0 & 0 & 1&y\\
	0& 0& 0&1 
	\end{bmatrix}
	\end{equation*}\end{footnotesize}
	\[Q(\mathit{G}_{4,1})^1=\{M(t,a,b,c)\in \mathit{G}_{4,1}|  \; b^2+\e c^2=0 , \e=0,1,-1\}.\]
	\item The Lie rack structure associated to $\g_{4,1}^2$ and the associated topological quandle are respectively
\begin{footnotesize}\begin{equation*}
	M(w,\,x,\,y,\,z)\rhd_2 M(t,\,a,\,b,\,c)=\\
\begin{bmatrix}
	1 & z & \frac12 z^2& w+\frac12 c^2y+\frac12 bz^2+(\e zc)(1+z)\\
	& & &  -bcz+c(x-w)+z(t-a)\\ 
	0 & 1 &z&bz-cy+w-x +\e zc\\ 
	0 & 0 & 1&y\\
	0& 0& 0&1 
	\end{bmatrix} 
	\end{equation*}\end{footnotesize}
	\[Q(\mathit{G}_{4,1})^2=\{M(t,a,b,c)\in \mathit{G}_{4,1}|  \;\e=0 \;\mbox{or } \; c=0 \}.\]
	\item The Lie rack structure associated to $\g_{4,1}^3$ and the associated topological quandle are respectively
\begin{footnotesize}\begin{equation*}
	M(w,\,x,\,y,\,z)\rhd_3 M(t,\,a,\,b,\,c)=\begin{bmatrix}
	1 & z & \frac12 z^2& w+\frac12 c^2y+\frac12 bz^2+(yc+ zb)(1+z)\\
	& & &  -bcz+c(x-w)+z(t-a)\\ 
	0 & 1 &z&bz-cy+w-x +(yc+ zb)\\ 
	0 & 0 & 1&y\\
	0& 0& 0&1 
	\end{bmatrix} 
	\end{equation*}\end{footnotesize}
	\[Q(\mathit{G}_{4,1})^3=\{M(t,a,b,c)\in \mathit{G}_{4,1}|  \;bc=0  \}.\]
\end{enumerate}
\item[$\bullet$$\g_{4,3}$.] The associated simply-connected Lie group is given by 
$$\mathit{G}_{4,3} = \displaystyle\left\{  
\begin{bmatrix}
e^{-z} & 0 & 0& w\\ 
0 & 1 &-z& x \\ 
0 & 0 & 1& y\\
0& 0& 0& 1
\end{bmatrix},\;
w,x,\,y,\,z,\,\in \mathbb{R}\displaystyle\right\}$$
\begin{enumerate}
	\item The Lie rack structure associated to $\g_{4,3}^1$ and the associated topological quandle	are respectively
\begin{footnotesize}\begin{equation*}
	M(w,\,x,\,y,\,z)\rhd_1 M(t,\,a,\,b,\,c)
	=\begin{bmatrix}
	e^{-z} & 0 & 0& te^{c-z}+e^{c}(w-t)\\ 
	0 & 1 &-z&cy-bz+x +zc\\ 
	0 & 0 & 1&y\\
	0& 0& 0&1 
	\end{bmatrix} 
	\end{equation*}\end{footnotesize}
	\[Q(\mathit{G}_{4,3})^1=\{M(t,a,b,c)\in \mathit{G}_{4,3}|  \;c=0  \}.\]
	\item The Lie rack structure associated to $\g_{4,3}^2$ and the associated topological quandle are respectively
\begin{footnotesize}\begin{equation*}
	M(w,\,x,\,y,\,z)\rhd_2 M(t,\,a,\,b,\,c)
	=\begin{bmatrix}
	e^{-z} & 0 & 0& te^{c-z}+e^{c}(w-t)\\ 
	0 & 1 &-z&cy-bz+x +(yb+ \e zc)\\ 
	0 & 0 & 1&y\\
	0& 0& 0&1 
	\end{bmatrix} 
	\end{equation*}\end{footnotesize}
	\[Q(\mathit{G}_{4,3})^2=\{M(t,a,b,c)\in \mathit{G}_{4,3}|  b^2+\e c^2=0 \}.\]
	\item The Lie rack structure associated to $\g_{4,3}^3$ and the associated topological quandle are respectively
	\begin{footnotesize}\begin{equation*}
	M(w,\,x,\,y,\,z)\rhd_3 M(t,\,a,\,b,\,c)
	=\begin{bmatrix}
	e^{-z} & 0 & 0& te^{c-z}+e^{c}(w-t)\\ 
	0 & 1 &-z&cy-bz+x +\e (yc+  zb)\\ 
	0 & 0 & 1&y\\
	0& 0& 0&1 
	\end{bmatrix} 
	\end{equation*}\end{footnotesize}
	\[Q(\mathit{G}_{4,3})^3=\{M(t,a,b,c)\in \mathit{G}_{4,3}|  bc=0  \}.\]
\end{enumerate}
\item[$\bullet$$\g^{-1}_{4,8}$.]  The associated simply-connected Lie group is given by 
	$$\mathit{G}^{-1}_{4,8} = \displaystyle\left\{  
\begin{bmatrix}
1& x & w\\ 
0 & e^{z} &y \\ 
0 & 0 & 1\\
\end{bmatrix},\;
w,x,\,y,\,z,\,\in \mathbb{R}\displaystyle\right\}.$$
The Lie rack structure associated to $(\g^{-1}_{4,8})^1$ and the associated topological quandle are respectively
\begin{footnotesize}\begin{align*}
&M(w,\,x,\,y,\,z)\rhd M(t,\,a,\,b,\,c)= \begin{bmatrix}
1 & a+xe^{c}-ae^{z} & w+ zc+ae^{-c}(b+y-be^{z}))\\ 
0 & e^{z} & (y-b)e^{-c}+be^{z-c}\\ 
0& 0& 1
\end{bmatrix}
\end{align*}\end{footnotesize}
\[Q(\mathit{G}^{-1}_{4,8})=\{M(t,a,b,c)\in \mathit{G}^{-1}_{4,8}|   c=0 \}.\]
\item[$\bullet$ $\g^{0}_{4,9}$.] The associated simply-connected Lie group is given by 
\begin{footnotesize}$$G^{0}_{4,9} = \displaystyle\left\{  
\begin{bmatrix}
1 & -x\cos(z)-y\sin(z)& y\cos(z)-x\sin(z)& -2w&0\\ 
0 & \cos(z) &\sin(z)& y& 0\\ 
0 & -\sin(z) & \cos(z) &x&0\\
0& 0& 0&1 &0\\
0& 0& 0&0 &e^{z}\\
\end{bmatrix},\;
w,x,\,y,\,z,\,\in \mathbb{R}\displaystyle\right\}.$$\end{footnotesize}
The Lie rack structure associated to $(\g^{0}_{4,9})^1$  and the associated topological quandle are respectively
\begin{footnotesize}\begin{align*}
&M(w,\,x,\,y,\,z)\rhd M(t,\,a,\,b,\,c)=\\
&\begin{bmatrix}
1 	&\cos(c+z)(a-x)+				&\cos(c+z)(y-b)+			& \cos(z)ay-\cos(z)bx+\sin(z)a^2+		&0\\ 
&\sin(c+z)(b-y)+				&\sin(c+z)(a-x)+			&-\sin(z)ax+\sin(z)b^2-\sin(z)by+			&\\
&-a\cos(c)-b\sin(c)			&b\cos(c)-a\sin(c)			&-2zc+ay-bx-2w						&\\
0 	&\cos(z) 							&\sin(z)							&\cos(c-z)b+ \sin(c-z)a+						&0\\ 
&										&									&\cos(c)(y-b)+\sin(c)(a-x)						&\\
0 	& -\sin(z) 							& \cos(z) 						&\cos(c-z)a- \sin(c-z)b+						&0\\
&										&									&\cos(c)(x-a)+\sin(c)(y-b)						&\\
0	&0									&0								&1 														&0\\
0	&0									&0								&0 														&e^{z}\\
\end{bmatrix}
\end{align*}\end{footnotesize}
\[Q(\mathit{G}^{0}_{4,9})=\{M(t,a,b,c)\in \mathit{G}^{0}_{4,9}|   c=0 \}.\]
\end{itemize}

\section{Some algebraic properties of the obtained topological quandles}\label{section5}

\subsection{Quasi-triviality}

We consider $(G,\rhd)$ a Lie rack associated to a symmetric Leibniz algebra. The rack operation is given by
\begin{equation}
{h \rhd g \, {\color{red}\xout{ : }}=\,g^{-1}hg \chi (h,\,g).}
\end{equation}
We consider the associated topological quandle
\[ Q(G)=\{ g\in G,\chi(g,g)=1  \}. \]
$Q(G)$ is quasi-trivial if and only if, for any $g,k\in Q(G)$ 
\[ g\rhd(g\rhd k)=g. \]
Let $g,k\in Q(G)$, We have
\begin{align*} g\rhd(g\rhd k)=g\rhd(k^{-1}gk\chi(g,k))&=\chi(g,k)^{-1}k^{-1}g^{-1}kgk^{-1}gk\chi(g,k)\chi(g,(k^{-1}gk\chi(g,k))\\&=k^{-1}g^{-1}kgk^{-1}gk\chi(g,(k^{-1}gk\chi(g,k))\\
&= k^{-1}g^{-1}kgk^{-1}gk,
\end{align*}
Since
\[ \chi(g_1g_2,h)=\chi(h,g_1g_2)=\chi(g_1,h)\chi(g_2,h)\ {{\rm and}}\ \chi(g,\chi(g,k))=\chi(g,g)=1. \]
Then $Q(G)$ is quasi-trivial if and only if
\[ [g,k^{-1}gk]=1,\ {{\rm for\ any}\ g,k\in Q(G).} \]where $[a,b]=aba^{-1}b^{-1}$.

Note that if the quandle $Conj(G)$ is quasi-trivial then $Q(G)$ is quasi-trivial.

We use this characterization to obtain the following result by a direct computation (see \cite{link}). 
\begin{prop}
	\begin{enumerate}
		\item 	The topological quandles $Q(G_{3,1})^1$,  $Q(G_{3,1})^2$, $Q(G_{3,1})^3$, and $Q(G_{3,1})^4$ are quasi-trivial.
		\item 	The topological quandles $Q(G_{2,1}\times \R^2)^2$, $Q(G_{2,1}\times \R^2)^3$ are quasi-trivial.
		\item   The topological quandles $Q(G_{3,1}\times \R)^1$, $Q(G_{3,1}\times \R)^2$, $Q(G_{3,1}\times \R)^3$, and $Q(G_{3,1}\times \R)^4$ are  quasi-trivial.
		\item   The topological quandles $Q(G_{3,2}\times \R)$, $Q(G_{3,3}\times \R)$, $Q(G^0_{3,4}\times \R)$, $Q(G^\al_{3,4}\times \R)$ , 
		$Q(G^0_{3,5}\times \R)$ and  $Q(G^\al_{3,5}\times \R)$ are quasi-trivial.
		\item  \begin{itemize}\item The topological quandle $Q(G_{4,1})^1$ is quasi-trivial if $\e= 0, 1$.
			\item  The topological quandle $Q(G_{4,1})^2$ is quasi-trivial if $\e \neq  0$.
			\item  The topological quandle $Q(G_{4,1})^3$ is quasi-trivial. 
		\end{itemize}
		\item \begin{itemize}
			\item  The topological quandle $Q(G_{4,3})^1$ is quasi-trivial.
			\item The topological quandle $Q(G_{4,3})^2$ is quasi-trivial if $\e=1$.\end{itemize}
		\item The topological quandles $Q(G^{-1}_{4,8})$ and $Q(G^0_{4,9})$ are quasi-trivial.
	\end{enumerate}
\end{prop}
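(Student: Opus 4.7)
The plan hinges on the criterion established just before the proposition: $Q(G)$ is quasi-trivial if and only if $[g, k^{-1}gk] = 1_G$ for all $g,k \in Q(G)$, together with the observation that $\mathrm{Conj}(G)$ being quasi-trivial forces $Q(G)$ to be quasi-trivial. I would go through the seven items in turn, working in the explicit matrix parameterisations of $G$ and of the loci $Q(G)$ given in Section~\ref{section4}, and in each case either invoking a structural reason or verifying the commutator identity by direct substitution.

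A structural shortcut disposes of a large block of the list. When $G$ is $2$-step nilpotent, every conjugate $k^{-1}gk$ differs from $g$ by an element of $Z(G)$, hence commutes with $g$ unconditionally; thus $\mathrm{Conj}(G)$ itself is quasi-trivial. This applies directly to the Heisenberg-type groups $G_{3,1}$ and $G_{3,1}\times \R$, covering items 1 and 3 at a stroke. For items 2, 4, and for $Q(G_{4,3})^1$, $Q(G^{-1}_{4,8})$, $Q(G^0_{4,9})$ in items 6 and 7, the defining equation of the quandle (of the shape $a=0$, $b=0$ or $c=0$) zeroes out the exponential or rotation parameter of the ambient solvable group, and the resulting sub-locus is either abelian or a Heisenberg-type $2$-step nilpotent subgroup; in either situation the commutator criterion holds automatically.

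The substantive cases are item 5 on $Q(G_{4,1})^{1,2,3}$ and item 6(ii) on $Q(G_{4,3})^2$, precisely because $G_{4,1}$ is filiform (so $3$-step, not $2$-step) nilpotent and the defining equation $b^{2}+\e c^{2}=0$ shifts its shape with the sign of $\e\in\{-1,0,1\}$. My strategy is a case split on $\e$: for $\e=1$ the locus collapses to $b=c=0$, an abelian plane on which the commutator condition is trivial; for $\e=0$ it collapses to $b=0$, a Heisenberg-like $2$-step nilpotent slice to which the previous shortcut applies; for $\e=-1$ the locus $b=\pm c$ is no longer a subgroup, and one must substitute two generic such elements $g,k$ into the explicit $4\times 4$ matrix form of $G_{4,1}$, expand the commutator $[g, k^{-1}gk]$, and track cancellations using $b^{2}=c^{2}$ to decide whether every entry vanishes. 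The expected output matches the stated dichotomy, with genuine non-vanishing obstructions appearing in $5(i)$ at $\e=-1$ and in $6(ii)$ outside $\e=1$, while the $\e$-independent statement for $5(iii)$ follows because the locus $bc=0$ is a union of the abelian slice $\{c=0\}$ and the Heisenberg slice $\{b=0\}$, and a cross-term calculation between the two slices cancels.

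The main obstacle is bookkeeping rather than ideas: each commutator in the $\e=-1$ branch is a product of four $4\times 4$ upper-triangular matrices depending on eight real parameters subject to a single quadratic constraint, and one must verify the cancellation of every off-diagonal entry in each sub-case of $\e$. The arithmetic is mechanical but voluminous, which is why the authors defer the explicit symbolic check to the external reference \cite{link}; a self-contained write-up would most naturally tabulate the residual commutator entries or appeal to a computer algebra verification.
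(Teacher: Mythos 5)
Your overall route is the same as the paper's: the paper reduces quasi-triviality of $Q(G)$ to the group-commutator condition $[g,k^{-1}gk]=1$ for all $g,k\in Q(G)$ and then simply declares the rest to be ``a direct computation''. Your structural shortcuts are a sensible way to organise that computation, and most of them are correct: a $2$-step nilpotent $G$ has $[G,G]\subseteq Z(G)$, so $k^{-1}gk=g\cdot[g^{-1},k^{-1}]$ commutes with $g$ and $\mathrm{Conj}(G)$ is quasi-trivial (covering $G_{3,1}$ and $G_{3,1}\times\R$); and for items 2, 4, 6(i), 7 and the cases $\e=0,1$ of $Q(G_{4,1})^1$, $\e\neq0$ of $Q(G_{4,1})^2$, $\e=1$ of $Q(G_{4,3})^2$, the locus $Q(G)$ does sit inside an abelian or Heisenberg-type subgroup, so the criterion holds automatically.

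There is, however, a genuine gap at $Q(G_{4,1})^3$: the cross-term between the two slices does \emph{not} cancel. For $g=M(w,x,y,z)$ and $k=M(t,a,b,c)$ in $G_{4,1}$, the product law gives $w''=w+w'+z(w'-x')+\tfrac12 z^2y'$, $(w-x)''=(w-x)+(w'-x')+zy'$, $y''=y+y'$, $z''=z+z'$, and a short computation shows that $g$ commutes with $k^{-1}gk$ if and only if $z(zb-cy)=0$. On the locus $\{bc=0\}$ this reduces, for $g$ in the slice $\{y=0\}$ and $k$ in the slice $\{c=0\}$, to $z^2b$, which is nonzero as soon as $z\neq0$ and $b\neq0$. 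Concretely, $g=M(0,0,0,1)$ and $k=M(0,0,1,0)$ both lie in $Q(G_{4,1})^3$, yet $k^{-1}gk=M(\tfrac12,-\tfrac12,0,1)$ and the $(1,4)$-entries of $g(k^{-1}gk)$ and $(k^{-1}gk)g$ are $\tfrac32$ and $\tfrac12$ respectively, so $[g,k^{-1}gk]\neq 1$. Since the commutator condition is \emph{necessary} for quasi-triviality (it is the case $\varphi=\mathrm{R}_k$ of the definition), your asserted cancellation cannot be repaired; the same issue would surface if you ran the deferred symbolic check. Only the one-sided cross-term ($g$ in $\{c=0\}$, $k$ in $\{b=0\}$) vanishes. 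You should therefore either restrict the claim for $Q(G_{4,1})^3$ or flag that this item of the proposition is inconsistent with the stated criterion; as written, your proof of 5(iii) does not go through.
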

It is important to point out that quasi-trivial quandles can be used to obtain link-homotopy invariants (see \cite{link}).

\subsection{Medial quandles} 
We prove that all classes of topological quandles $Q(G_{3,1})$ and $Q(G_{3,1}\times \R)$ are  medial.  {To do that we need} the following lemma {which can be easily shown}.
\begin{lem}
	\begin{enumerate}
	\item For any $M(a,b,c), M(x,y,z),\; M(t,u,w) \in G_{3,1}$, we have
	\begin{eqnarray*}
		M(a,b,c)\rhd[M(x,y,z)\rhd M(t,u,w)]&=& M(a,b,c)\rhd M(x,y,z),\\
		M(a,b,c)\rhd^{-1}[M(x,y,z)\rhd^{-1} M(t,u,w)]&=& M(a,b,c)\rhd^{-1} M(x,y,z),\\
		M(a,b,c)\rhd[M(x,y,z)\rhd^{-1} M(t,u,w)]&=& M(a,b,c)\rhd M(x,y,z).
	\end{eqnarray*}

  \item  For any $M(t, a,b,c), M(w, x,y,z), M(s,r,u,v) \in (G_{3,1}\times \R)$, we have
  \begin{eqnarray*}
  	M(t, a,b,c)\rhd[M(w, x,y,z)\rhd M(s,r,u,v)]&=&	M(t, a,b,c)\rhd M(w, x,y,z),\\
    M(t, a,b,c)\rhd^{-1}[M(w, x,y,z)\rhd^{-1}  M(s,r,u,v)]&=&   M(t, a,b,c)\rhd^{-1} M(w, x,y,z),\\
    M(t, a,b,c)\rhd[M(w, x,y,z)\rhd^{-1}M(s,r,u,v)]&=& M(t, a,b,c)\rhd  M(t, a,b,c).
     \end{eqnarray*}

\end{enumerate}
\end{lem}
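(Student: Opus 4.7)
The plan is to reduce all six identities to a single structural observation read off from the explicit formulas listed in Section~\ref{section4}. The first step is to scan the four rack structures $\rhd_1,\ldots,\rhd_4$ on $G_{3,1}$ and record that, in every case, the product $M(x,y,z)\rhd_i M(a,b,c)$ has the shape $M(\star,y,z)$, where the new top-right entry $\star$ depends on $(x,y,z)$ and on $(b,c)$ but never on the first coordinate $a$ of the right factor. Equivalently, $M_1\rhd M_2$ depends on $M_1$ in full, and on $M_2$ only through its second and third coordinates.

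Granting this, the first identity of (1) is immediate: $M(x,y,z)\rhd M(t,u,w)$ has second and third coordinates $(y,z)$, as does $M(x,y,z)$ itself, so $M(a,b,c)\rhd(-)$ returns the same value on both inputs. For the identities involving $\rhd^{-1}$, I would first invoke the fact, derived in the proof of Theorem~3.1 from the relation $\chi(h\rhd g,h_1)=\chi(h,h_1)$, that $\mathrm{R}_g^{-1}=\mathrm{R}_{g^{-1}}$; this lets me replace every $\rhd^{-1}$ by $\rhd$ composed with group-theoretic inversion. Since inversion in $G_{3,1}$ sends $M(a,b,c)$ to $M(bc-a,-b,-c)$, the intermediate term $M(x,y,z)\rhd^{-1} M(t,u,w) = M(x,y,z)\rhd M(t,u,w)^{-1}$ still has the shape $M(\star,y,z)$, and the same ``only the second and third coordinates of the right factor matter'' observation yields the remaining two identities.

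Part (2) is entirely parallel. I would check, structure by structure, that the rack products on $G_{3,1}\times\R$ listed in Section~\ref{quand} all have the shape $M(\star,x,y,z)$ with $\star$ independent of the first coordinate $t$ of the right operand; equivalently, $M_1\rhd M_2$ depends on $M_1$ fully and on $M_2$ only through its last three coordinates. The same three-step template---this shape observation, then $\mathrm{R}_g^{-1}=\mathrm{R}_{g^{-1}}$, then the explicit form of $M(t,a,b,c)^{-1}$---then delivers the three identities for $G_{3,1}\times\R$.

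The only genuine obstacle is conceptual rather than computational: why does $M_1\rhd M_2$ forget the first coordinate of $M_2$? The reason is built into Method~\ref{me}: the correction factor $\chi(h,g)=\exp\bigl(\beta(\kappa(h),\kappa(g))\bigr)$ factors through the abelianization $\kappa$, and in each Lie algebra appearing here the commutator ideal contains precisely the first-coordinate direction, so that direction is killed by $\kappa$ and drops out of $\chi$. If I wanted a single uniform proof covering every case at once I would formalize this observation; but since the explicit rack formulas are already tabulated in Section~\ref{quand}, the quickest route to the lemma is the direct inspection above, which is purely mechanical.
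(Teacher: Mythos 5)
Your proposal is correct, and it follows essentially the same route as the paper, whose entire proof is ``By straightforward computation'' from the tabulated formulas: your organizing observation --- that each product $M_1\rhd M_2$ preserves the non-central coordinates of $M_1$ and depends on $M_2$ only through its non-central coordinates, combined with $\mathrm{R}_g^{-1}=\mathrm{R}_{g^{-1}}$ and the explicit inverse $M(a,b,c)^{-1}=M(bc-a,-b,-c)$ --- is exactly what makes that computation work, and you have verified it against all the listed structures. (The anomalous right-hand side $M(t,a,b,c)\rhd M(t,a,b,c)$ in the third identity of part (2) is a typo in the paper's statement, not a defect of your argument.)
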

\begin{proof}
	By straightforward computation. 
\end{proof}
Thus, we get 
\begin{prop}
{Both} classes of quandles $Q(G_{3,1})$ and $Q(G_{3,1}\times \R)$ are medial.
\end{prop}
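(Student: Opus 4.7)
The plan is to deduce mediality directly from the preceding lemma together with the right self-distributivity axiom of the rack. The lemma provides an \emph{absorption} identity $a \rhd (x \rhd t) = a \rhd x$ valid on all of $G_{3,1}$ and on all of $G_{3,1} \times \R$; since both $Q(G_{3,1})$ and $Q(G_{3,1} \times \R)$ are subracks of the corresponding Lie racks, the identity restricts to them unchanged, so the argument will work uniformly in both cases.

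Fix four elements $p,q,r,s$ in the quandle under consideration. Applying the absorption identity to the left-hand side of the mediality equation (with outer element $p \rhd q$) gives $(p \rhd q) \rhd (r \rhd s) = (p \rhd q) \rhd r$; applying it to the right-hand side (with outer element $p \rhd r$) gives $(p \rhd r) \rhd (q \rhd s) = (p \rhd r) \rhd q$. Thus mediality reduces to the simpler identity $(p \rhd q) \rhd r = (p \rhd r) \rhd q$.

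To close this, expand the left-hand side by right self-distributivity as $(p \rhd r) \rhd (q \rhd r)$, and then contract via absorption once more (with outer element $p \rhd r$) to obtain $(p \rhd r) \rhd q$. Chaining these equalities yields the mediality law.

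The whole argument is a three-line formal manipulation requiring no matrix-level computation; all the genuine work is already packaged into the lemma, whose verification reduces to direct inspection of the explicit formulas for $\rhd$ listed in Section~\ref{section4}. Consequently, the only potential obstacle is bookkeeping — choosing the correct ``outer element'' at each invocation of absorption — and once that is arranged as above, no further difficulty arises.
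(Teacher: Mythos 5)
Your argument is correct and is essentially the paper's own proof: both rest on the absorption identity $a\rhd(x\rhd t)=a\rhd x$ from the preceding lemma together with right self-distributivity, and differ only in bookkeeping (you reduce both sides of the mediality equation first and then bridge them, while the paper chains the same four equalities in a single line). No gap; the observation that the lemma holds on all of $G_{3,1}$ and $G_{3,1}\times\R$ and hence restricts to the subquandles is exactly what the paper uses implicitly.
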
 
\begin{proof}
	Let $(M( a_i,b_i,c_i))_{1\leq i\leq 4} \in (G_{3,1})^4$. Due to the above lemma, we have
	\begin{eqnarray*}
		(M( a_1,b_1,c_1)&\rhd& M( a_2,b_2,c_2))\rhd (M(a_3,b_3,c_3)\rhd  M(a_4,b_4,c_4))\\
		&=&(M( a_1,b_1,c_1)\rhd M( a_2,b_2,c_2))\rhd M( a_3,b_3,c_3),\\
		&=&(M( a_1,b_1,c_1)\rhd M( a_3,b_3,c_3))\rhd (M( a_2,b_2,c_2)\rhd M( a_3,b_3,c_3)), \; \mbox{(self-distributivity)}\\
		&=& (M( a_1,b_1,c_1)\rhd M( a_3,b_3,c_3))\rhd M( a_2,b_2,c_2),\\
		&=& (M( a_1,b_1,c_1)\rhd M( a_3,b_3,c_3))\rhd (M( a_2,b_2,c_2)\rhd  M(a_4,b_4,c_4)).
	\end{eqnarray*}
Hence, the quandles $Q(G_{3,1})$ are medial. Similarly, we show that $Q(G_{3,1}\times \R)$ are medial. 
\end{proof}

\section{Example of computation}\label{section6}
In this section we apply Method~\ref{1} to construct the Lie racks associated to symmetric Leibniz algebras with underlying Lie algebra $\g_{4,1}$. 
To start, we determine the symmetric Leibniz algebras associated to $\g_{4,1}$. For that, we proceed as described in~\ref{om}.
Consider the Lie algebra $\g_{4,1}$ with non-vanishing Lie brackets given by 
  $$[e_2,\, e_4]= e_1,\;[e_3,\, e_4]= e_2.$$
The center is $Z(\g_{4,1})= \R e_1$. By applying Proposition~\ref{3.1} and by doing a straightforward computations, we get the corresponding symmetric bilinear $\om$ satisfying the equation~\eqref{eq}.  
 \begin{eqnarray*}
 	\om(e_3,e_3)&=&  \al e_{1},\\
 	\om(e_4,e_4)&=&  \be e_{1},\\
 	\om(e_3,e_4)&=&\ga e_1
 \end{eqnarray*}
where $(\alpha,\, \beta,\, \gamma) \neq 0$. 
%The following proposition proves useful in reducing the parameters $\alpha,\, \beta,\, \gamma$. 

We use now Proposition \ref{equi} to complete the classification.
We consider the group of the automorphisms of $\g_{4,1}$ given by 
$$T=\left[ \begin {array}{cccc} a_{{3,3}}{a_{{4,4}}}^{2}&a_{{2,3}}a_{{4,4
}}&a_{{1,3}}&a_{{1,4}}\\ \noalign{\medskip}0&a_{{3,3}}a_{{4,4}}&a_{{2,
		3}}&a_{{2,4}}\\ \noalign{\medskip}0&0&a_{{3,3}}&a_{{3,4}}
\\ \noalign{\medskip}0&0&0&a_{{4,4}}\end {array} \right] $$
 
with $\det(T)= a_{3,3}^2a^4_{4,,4}$. We consider $\mu(u,v)=T^{-1}\om(Tu,Tv)$. Then 
\begin{eqnarray*}
	\mu(e_3,e_3)&=&  \frac{a_{3,3}}{a^2_{4,4}}\al e_{1},\\
	\mu(e_4,e_4)&=&  \frac{\al a^2_{3,4}+\be a^2_{4,4}+ 2\ga a_{3,4} a_{4,4}}{a_{3,3}a^2_{4,4}} e_{1},\\
	\mu(e_3,e_4)&=& \frac{\al a_{3,4}+\ga a_{4,4}}{a^2_{4,4}} e_1.
\end{eqnarray*}
Thus, we have three cases: 
\begin{enumerate}
	\item If $\al \neq 0$, we take $a_{3,3} = \frac{a^2_{4, 4}}{\al}$ and $a_{3,4} = \frac{-\ga a_{4,4}}{\al}$. Hence, we have 
	  \begin{eqnarray*}
	  \mu(e_3,e_3)&=&  e_{1},\\
	  \mu(e_4,e_4)&=&- \frac{\al \be -\ga^2}{a^2_{4,4}} e_{1}.
      \end{eqnarray*}
   \item If $\al = 0$ and $\ga =0$, we have $	\mu(e_4,e_4)= \frac{\be}{a_{3,3}}$.
     \item If $\al=0$ and $\ga \neq 0$, we take $a_{4, 4} = \ga, a_{3, 4} = -\frac{\be}{2}$. Then we get 
     $\mu (e_3,e_4)= e_1$.
    \end{enumerate}
Therefore, we obtain three classes of symmetric Leibniz algebras  {whose} underlying Lie algebra {is} $\g_{4,1}$ given in Table~\ref{tab1}. 

We finish this subsection by  {applying} explicitly the method~\ref{me} to get the Lie rack structure on the Lie group $\mathit{G}_{4,1}$ associated to the symmetric Leibniz algebra $\g^1_{4,1}$. We have $\g^1_{4,1}.\g^1_{4,1}\simeq \langle e_1,e_2\rangle$ and the quotient space $\mathfrak{a}:= \g_{4,1}/ \g^1_{4,1}.\g^1_{4,1}$ is identified to $\R^2$. So the projection $q_{1}\, : \g_{4,1} \longrightarrow \mathfrak{a}$ is given by 
$$ q_{1}(w,x, \,y,\,z)= (y,\,z),$$
it follows that the homomorphism $\kappa \,: \mathit{G}_{4,1}\longrightarrow \mathbb{R}^{2}$ must be defined by $ \kappa(w,x,\,y,\,z)\,= (y,\,z)$.\\
Now, $\beta_1 \,: \mathbb{R}^{2}  \times \mathbb{R}^{2} \too  \g_{4,1}$ is defined by 
	$$\be_{1}((y,\,z), (b,\,c))= (yb+\e zc)e_1, \quad \e=0,1,-1 $$
and the map $ \chi_1 \, :\mathit{G}_{4,1}\times \mathit{G}_{4,1} \longrightarrow \mathit{G}_{4,1}$ is given by 
	\begin{equation*}
	\chi_1 ((w,x,\,y,\,z),\,(t, a,\,b,\,c))\,=\begin{bmatrix}
	1 & 0 & 0& (yb+\e zc) \\ 
	0 & 1 &0& (yb+\e zc)\\ 
	0 & 0 & 1&0\\
	0& 0& 0&1 
	\end{bmatrix}. 
	\end{equation*}
	Finally, we get the Lie rack product on $\mathit{G}_{4,1}$
	\begin{footnotesize}\begin{equation*}
		M(w,\,x,\,y,\,z)\rhd_1 M(t,\,a,\,b,\,c)=
		\begin{bmatrix}
		1 & z & \frac12 z^2& w+\frac12 c^2y+\frac12 bz^2+(yb+\e zc)(1+z)\\
		& & &  -bcz+c(x-w)+z(t-a)\\ 
		0 & 1 &z&bz-cy+w-x +(yb+\e zc)\\ 
		0 & 0 & 1&y\\
		0& 0& 0&1 
		\end{bmatrix}
		\end{equation*}
	\end{footnotesize}

\newpage

\begin{center}
	\begin{footnotesize}
		\begin{tabular}{|c|c|c|c|c|}
			\hline
			Lie algebra&Non-vanishing &Symmetric Leibniz&Name&Conditions\\
			&Lie brackets&Non-vanishing brackets&&\\
			\hline
			\multirow{4}{*}{$\mathfrak{g}_{3,1}$}& \multirow{4}{*}{$[e_2,e_3]=e_1$}&  $e_2.e_2 = e_1,\;$ $e_3.e_3=e_1,\;$$e_2.e_3 =2e_1$&$\mathfrak{g}_{3,1}^1$&\\
			\cline{3-4}
			&&$e_3.e_3 = e_1,\;$$e_2.e_3 =-e_3.e_2=e_1$&$\mathfrak{g}_{3,1}^2$&\\
			\cline{3-4}
			&&$e_2.e_2 = e_1,\;$ $e_3.e_3=\e e_1,\;$$e_2.e_3 = -e_3.e_2=e_1$& 
			$\mathfrak{g}_{3,1}^3$&$\e =0,1$\\
			\cline{3-4}
			&&$e_2.e_3 = (\ga +1)e_1,\;$$e_3.e_2 = (\gamma -1)e_1$&$\mathfrak{g}_{3,1}^4$&$\ga \neq 0$\\
			\hline
			
			\multirow{5}{*}{ }& \multirow{5}{*}{}&  $e_2.e_3 =e_3.e_2= \e (e_3+e_4),\;$ && \\
			& & $e_2.e_4 =e_4.e_2= -\e (e_3+e_4),$&  $(\mathfrak{g}_{2,1}\oplus 2\mathfrak{g}_{1})^{1}$ & $\e \neq 0$\\
			& &  $e_1.e_2 =-e_2.e_1= e_1$ & &\\
			\cline{3-4}
			& &$e_2.e_2 =e_3+e_4,\;$ $e_2.e_3 =e_3.e_2= \e (e_3+e_4),$ &  & \\
			$\mathfrak{g}_{2,1}\oplus 2\mathfrak{g}_{1}$&$[e_1,\, e_2]= e_1$ & $e_2.e_4 =e_4.e_2= -\e (e_3+e_4),$ & $(\mathfrak{g}_{2,1}\oplus 2\mathfrak{g}_{1})^{2}$ & $\e \neq 0$\\
			&&  $e_1.e_2 =-e_2.e_1= e_1$ & & \\
			\cline{3-4}
			&& $e_2.e_2 =  e_3,\;$  $e_1.e_2 =-e_2.e_1= e_1$& 
			$(\mathfrak{g}_{2,1}\oplus 2\mathfrak{g}_{1})^{3}$& \\
			\cline{3-4}
			&&$e_2.e_4 =e_4.e_2= e_3,\;$$e_1.e_2 =-e_2.e_1= e_1$ &	$(\mathfrak{g}_{2,1}\oplus 2\mathfrak{g}_{1})^{4}$&\\
			\cline{3-4}
			&& $e_2.e_2=\ga e_3,\;$ $e_3.e_3 = \e e_3$ & 	$(\mathfrak{g}_{2,1}\oplus 2\mathfrak{g}_{1})^{5}$&$\ga=0,1$\\
			&& $e_1.e_2 =-e_2.e_1= e_1$ &&$\e=0,1,-1$\\
			\hline

			\multirow{4}{*}{$\mathfrak{g}_{3,1}\oplus \mathfrak{g}_{1}$}& \multirow{4}{*}{$[e_2,e_3]=e_1$}&  $e_2.e_2 = e_1,\;$ $e_3.e_3=e_1,\;$$e_2.e_3 =2e_1$&$(\mathfrak{g}_{3,1}\oplus \mathfrak{g}_{1})^1$&\\
			\cline{3-4}
			&&$e_3.e_3 = e_1,\;$$e_2.e_3 =-e_3.e_2=e_1$&$(\mathfrak{g}_{3,1}\oplus \mathfrak{g}_{1})^2$&\\
			\cline{3-4}
			&&$e_2.e_2 = e_1,\;$ $e_3.e_3=\e e_1,\;$$e_2.e_3 = -e_3.e_2=e_1$& 
			$(\mathfrak{g}_{3,1}\oplus \mathfrak{g}_{1})^3$&$\e =0,1$\\
			\cline{3-4}
			&&$e_2.e_3 = (\ga +1)e_1,\;$$e_3.e_2 = (\gamma -1)e_1$&$(\mathfrak{g}_{3,1}\oplus \mathfrak{g}_{1})^4$&$\ga \neq 0$\\
			
			\hline 	$\mathfrak{g}_{3,2}\oplus \mathfrak{g}_{1}$   &  $[e_3,\, e_1]= e_1$   &$e_{3}.e_{3} =  e_{4},\; $ $e_{3}.e_{1} = -e_{3}.e_{1}= e_1$  &     $(\mathfrak{g}_{3,2}\oplus \mathfrak{g}_{1})^1$ &  \\
			&  $[e_2,\, e_3]= e_{1}- e_2$  & $e_{2}.e_{3} = -e_{3}.e_{2} = e_{1}-e_2$  & &  \\

			\hline $\mathfrak{g}_{3,3}\oplus \mathfrak{g}_{1}$ & $[e_3,\, e_1]= e_1$  & $e_{3}.e_{3} =   e_{4},\;$ $e_{3}.e_{1} = -e_{1}.e_{3}= e_{1}$  &$(\mathfrak{g}_{3,3}\oplus \mathfrak{g}_{1})^1$  &  \\
			& $[e_2,\, e_3]= - e_2$ & $e_{2}.e_{3} = -e_{3}.e_{2} = -e_2$ &  &  \\
			
			\hline  $\mathfrak{g}^{0}_{3,4}\oplus \mathfrak{g}_{1}$  &  $[e_2,\, e_3]=  e_1$& $e_{3}.e_{3} =  e_{4}, \;$ $e_{2}.e_{3} = -e_{3}.e_{2}= e_{1}$  &$(\mathfrak{g}^{0}_{3,4}\oplus \mathfrak{g}_{1})^1$   &  \\
			& $[e_3,\, e_1]= -e_2$  &  $e_{3}.e_{1} = -e_{1}.e_{3}= -e_{2}$ &   & \\
			
			\hline  $\mathfrak{g}^{\al}_{3,4}\oplus \mathfrak{g}_{1}$ & $[e_2,\, e_3]=  e_1-\al e_2$  & $e_{3}.e_{3} =  e_{4},\;$  $e_{2}.e_{3} = -e_{3}.e_{2}= e_1-\al e_2$& $(\mathfrak{g}^{\al}_{3,4}\oplus \mathfrak{g}_{1})^1$   & $\alpha >  0$ \\
			& $[e_3,\, e_1]= \al e_1- e_2$  & $e_{3}.e_{1} = -e_{1}.e_{3}= \al e_1- e_2$  & & $\al \neq 1$ \\

			\hline $\mathfrak{g}^{0}_{3,5}\oplus \mathfrak{g}_{1}$  & $[e_2,\, e_3]=  e_1$  & $e_{3}.e_{3} =   e_{4},\;$$e_{2}.e_{3} = -e_{3}.e_{2}= e_1$ &     $(\mathfrak{g}^{0}_{3,5}\oplus \mathfrak{g}_{1})^1$ & \\
			&$[e_3,\, e_1]= e_2$ &$e_{3}.e_{1} = -e_{1}.e_{3}=  e_2$  &    & \\
			
			\hline  $\mathfrak{g}^{\al}_{3,5}\oplus \mathfrak{g}_{1}$ & $[e_2,\, e_3]=  e_1-\al e_2$ &  $e_{3}.e_{3} =   e_{4},\;$$e_{2}.e_{3} = -e_{3}.e_{2}= e_1-\al e_2$  & $(\mathfrak{g}^{\al}_{3,5}\oplus \mathfrak{g}_{1})^1$& \\
			&$[e_3,\, e_1]= \al e_1+ e_2$ &$e_{3}.e_{1} = -e_{1}.e_{3}= \al e_1+ e_2$   &  &  $\alpha >  0$ \\
			\hline
			\multirow{3}{*}{$\mathfrak{g}_{4,1}$ }&\multirow{3}{*}{$[e_2,\, e_4]= e_1$}& $e_{3}.e_{3} =  e_{1},\: e_{4}.e_{4} =  \e e_{1} $& $\mathfrak{g}^1_{4,1}$ & $\e=0,1,-1$ \\
			& & $e_{3}.e_{4} = -e_{4}.e_{3}=  e_{2},\;$$e_{2}.e_{4} =-e_{4}.e_{2} = e_{1} $  &  &\\
			\cline{3-4}
			&$[e_3,\, e_4]= e_2$&$ e_{4}.e_{4} =  \e e_{1} $$e_{3}.e_{4} = -e_{4}.e_{3} = e_{2}$&$\mathfrak{g}_{4,1}^2$& $\e= 0,1$\\
			& & $e_{2}.e_{4} =-e_{4}.e_{2} = e_{1} $& & \\
			\cline{3-4}
			& &$e_{3}.e_{4} = e_1+e_2,\;$  $e_{4}.e_{3} = e_1- e_{2},\;$$e_{2}.e_{4} =-e_{4}.e_{2} = e_{1} $ & 
			$\mathfrak{g}_{4,1}^3$&\\
			\hline
			\multirow{3}{*}{$\mathfrak{g}_{4,3}$ }&\multirow{3}{*}{ $[e_1,\, e_4]= e_1$ }& $e_{4}.e_{4} =   e_{2},\;$$e_{1}.e_{4} = -e_{4}.e_{1} = e_{1}$& $\mathfrak{g}^1_{4,3}$ & \\
			& &  $e_{3}.e_{4} =-e_{4}.e_{3}= e_{2}$  &  &\\
			\cline{3-4}
			&$[e_3,\, e_4]= e_2$&$e_{3}.e_{3}=e_2,\; e_{4}.e_{4} =   \e e_{2} $ &$\mathfrak{g}_{4,3}^2$& $\e \in \R$\\
			& &  $e_{1}.e_{4} = -e_{4}.e_{1} = e_{1},\;$ $e_{3}.e_{4} =-e_{4}.e_{3}= e_{2}$& & \\
			\cline{3-4}
			& & $e_{1}.e_{4} = -e_{4}.e_{1} = e_{1}, \;$$e_{3}.e_{4} = (\e +1) e_{2}$& 
			$\mathfrak{g}_{4,3}^3$&  $\e \neq 0$\\
			&  &$e_{4}.e_{3} = (\e -1) e_{2}$ &  & \\
			
			\hline  $\mathfrak{g}^{-1}_{4,8}$ & $[e_2,\, e_3]= e_1$  &$e_{4}.e_{4} =  e_{1},\; $$e_{2}.e_{3} = -e_{3}.e_{2}= e_{1}  $&  &  \\
			& $[e_3,\, e_4]= -e_3$  & $e_{3}.e_{4} = -e_{4}.e_{3}= -e_{3}$ &  $(\mathfrak{g}^{-1}_{4,8})^1$   & \\
			& $[e_2,\, e_4]= e_2$ & $e_{2}.e_{4} = -e_{4}.e_{2}= e_{2},\;$  &   &  \\

			\hline $\mathfrak{g}^{0}_{4,9}$ &$[e_2,\, e_3]= e_1$  &  $e_{4}.e_{4} =   e_{1},\;$ $e_{2}.e_{3} = -e_{3}.e_{2}= e_{1}$&   &  \\
			&  $[e_2,\, e_4]=- e_3$ &  $e_{2}.e_{4} = -e_{4}.e_{2}=- e_{3}$  &$(\mathfrak{g}^{0}_{4,9})^1$& \\
			&$[e_3,\, e_4]= e_2$ & $e_{3}.e_{4} = -e_{4}.e_{3}= e_{2}$ &  & \\
			
			\hline  
			
		\end{tabular}\captionof{table}{\mbox{Symmetric Leibniz algebras of dimension 3 and 4.} \label{tab1} }	
	\end{footnotesize}\end{center}

\end{document}